\definecolor{nicegreen}{rgb}{0,0.5,0}
\definecolor{green}{rgb}{0,0.5,0.5}
\def\R{{\mathbb R}}
\def\Re{{\mathbb R}}
\def\eps{{\varepsilon}}
\newtheorem{theorem}{Theorem}[section]
\newtheorem{lemma}[theorem]{Lemma}
\newtheorem{corollary}[theorem]{Corollary}
\let\@fnsymbol\@arabic\makeatother
\begin{document}
\title{Distinct distances between points and lines\let\thefootnote\relax\footnotetext{
Part of this research was performed while the first, second, and fourth authors were visiting the Institute for Pure and Applied Mathematics (IPAM) in Los Angeles, which is supported by the National Science Foundation.}
}

\author{Micha Sharir\thanks{School of Computer Science, Tel Aviv University, Tel Aviv 69978, Israel. 
Supported by BSF Grant 2012/229, by ISF Grant 892/13, by the I-CORE program (Center No.~4/11), and by the Hermann Minkowski--MINERVA Center for Geometry.}
\and
Shakhar Smorodinsky\thanks{Department of Mathematics, Ben-Gurion University of the NEGEV, Be'er Sheva 84105, Israel; and EPFL, Switzerland. 
Partially supported by Grant 1136/12 from the Israel Science Foundation, and by Swiss National Science Foundation Grants 200020144531 and 200021-137574.}
\and
Claudiu Valculescu\thanks{EPFL, Switzerland. Partially supported by Swiss National Science Foundation Grants 200020-144531 and 200021-137574.}
\and
Frank de Zeeuw\footnotemark[3]
}

\date{}
\maketitle

\vspace{-25pt}
\begin{abstract}
We show that for $m$ points and $n$ lines in $\R^2$, the number of distinct distances between the points
and the lines is $\upOmega(m^{1/5}n^{3/5})$, as long as $m^{1/2}\le n\le m^2$.
We also prove that for any $m$ points in the plane, not all on a line, the number of
distances between these points and the lines that they span is $\upOmega(m^{4/3})$.
The problem of bounding the number of distinct point-line distances can be reduced to the problem of bounding the number of tangent pairs among a finite set of lines and
a finite set of circles in the plane,
and we believe that this latter question is of independent interest.
In the same vein, we show that $n$ circles in the plane determine at most $O(n^{3/2})$ points where
two or more circles are tangent, improving the previously best known bound of $O(n^{3/2}\log n)$.
Finally, we study three-dimensional versions of the distinct point-line distances problem, namely, distinct point-line distances and distinct point-plane distances.
The problems studied in this paper are all new, and the bounds that we derive for them, albeit
most likely not tight, are non-trivial to prove. 
We hope that our work will motivate further studies of these and related problems.
\end{abstract}


\section{Introduction}\label{sec:intro}
In 1946 Paul Erd{\H o}s~\cite{Er46} posed the following two problems, which later became
extremely influential and central questions in combinatorial geometry:
the so-called \emph{repeated distances} and \emph{distinct distances} problems.
The first problem deals with the maximum number of repeated distances in a set of $n$ points in the plane, or in other words, the maximum number of
pairs of points at some fixed distance. 
The best known upper bound is
$O(n^{4/3})$~\cite{SST}, but the best known lower bound, attained by the
$\sqrt{n}\times\sqrt{n}$ grid, is only $\upOmega(n^{1+ \frac{c}{\log \log n}})$~\cite{Er46}.
The second problem asks for the minimum number of distinct distances determined by a set of $n$ points in the plane.
Guth and Katz~\cite{GK} proved that the number of distinct distances is
$\upOmega(n/\log n)$ for any set of $n$ points in the plane. This bound is nearly tight in the
worst case, since an upper bound $O(n/\sqrt{\log n})$ is attained by the $\sqrt{n}\times\sqrt{n}$ grid~\cite{Er46}.

In this paper we consider questions similar to those above, but for distances \emph{between points and lines}.
To be precise, we define the distance between a point $p$ and a line $\ell$ to be the minimum Euclidean distance between $p$ and a point of $\ell$.

Let $P$ be a set of $m$ points and $L$ a set of $n$ lines in the plane.
Let $I(P,L)$ denote the number of incidences between $P$ and $L$, i.e., the number of pairs $(p,\ell)\in P\times L$ such that the point $p$ lies on the line $\ell$.
The classical result of Szemer\'edi and Trotter \cite{ST83} asserts that
\begin{equation} \label{eq:st}
I(P,L) = O\left(m^{2/3}n^{2/3}+ m+n\right) .
\end{equation}
This bound is tight in the worst case, by constructions due to Erd{\H o}s and to Elekes.
See the survey of Pach and Sharir \cite{PS-incsurvey} for more details on geometric incidences.

The point-line incidence setup can be viewed as a special instance
of a repeated distance problem between points and lines.
Specifically, the Szemer{\' e}di-Trotter
result provides a sharp bound on the number of point-line pairs such that the point is at distance $0$ from the line.
As a matter of fact, the same bound holds if we consider pairs $(p,\ell)\in P\times L$
that have any fixed positive distance, say $1$. Indeed, replace each line $\ell\in L$
by a pair $\ell^+$, $\ell^-$ of lines parallel to $\ell$ and at distance $1$ from it.
Then any point $p\in P$ at distance $1$ from $\ell$ must lie on one of these lines.
Hence the number of point-line pairs at distance $1$ is at most the number of incidences
between the $m$ points of $P$ and the $2n$ lines $\ell^+$, $\ell^-$, for $\ell\in L$.
(Actually, a line in the shifted set might arise twice, but this does not affect the asymptotic bound.)
This proves that the number of times a single distance can occur between $m$ points and $n$ lines is
$O\left(m^{2/3}n^{2/3}+ m+n\right)$, and this bound is easily seen to be tight in the worst case.
Indeed, as mentioned above, there are sets of points and lines with this number of incidences, 
and by replacing each line with a parallel line at distance $1$, we get a construction with $\Theta\left(m^{2/3}n^{2/3}+ m+n\right)$ repeated point-line distances.

\paragraph{Distinct point-line distances.}
Our first main result concerns distinct distances between $m$ points and $n$ lines in the plane.
In contrast with the repeated distances question, as discussed above,
the distinct distances variant seems harder than for point-point distances,
and the lower bound that we are able to derive is inferior to that of \cite{GK}.
Nevertheless, deriving this bound is not an easy task, and follows by a combination
of several advanced tools from incidence geometry. We hope that our work will trigger
further research into this problem.

We write $D(m,n)$ for the minimum number of point-line distances determined by a set of $m$ points and a set of $n$ lines
in $\R^2$. Our first main theorem is the following lower bound for $D(m,n)$.

\begin{theorem}
\label{thm:main1}
For $m^{1/2}\le n\le m^2$, the minimum number $D(m,n)$ of point-line distances between $m$ points and $n$ lines in $\R^2$ satisfies
\[D(m,n) = \upOmega\left(m^{1/5}n^{3/5}\right).\]
\end{theorem}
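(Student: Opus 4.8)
The plan is to reduce the distinct-distances question to a tangency-counting problem and then to bound the latter by an incidence argument. Suppose $P$ realizes only $t := D(m,n)$ distinct distances to the lines of $L$, with values $d_1,\dots,d_t$. Around each of the $m$ points I would draw the $t$ circles of radii $d_1,\dots,d_t$, producing a set $\mathcal{C}$ of $N = mt$ circles organized into $m$ concentric families that share the same radius set. Since $d(p,\ell)$ equals some $d_i$, the line $\ell$ is tangent to exactly one circle of the family centred at $p$, namely circle$(p,d_i)$. Thus each of the $mn$ point-line pairs produces a line-circle tangency, the map $(p,\ell)\mapsto(\ell,\mathrm{circle}(p,d(p,\ell)))$ is injective, and so the number $T$ of tangent (line, circle) pairs satisfies $T \ge mn$. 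An upper bound $T \le F(m,t,n)$ will then force $t$ to be large.

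Next I would turn the tangency count into an incidence problem. Writing a non-vertical line as $y = sx + b$, a circle with centre $(a,c)$ and radius $r$ is tangent to it precisely when $b = c - as \pm r\sqrt{1+s^2}$; hence in the $(s,b)$-plane each circle becomes two curves from a three-parameter family whose members meet pairwise in at most two points, each line becomes a point, and tangencies become point-curve incidences. More usefully, I would lift to the space of circles $(x,y,r)\in\R^3$: each circle of $\mathcal{C}$ is a point, and each line $\ell = \{\alpha x + \beta y = \gamma\}$ (with $\alpha^2+\beta^2=1$) becomes the pair of planes $\{\alpha x + \beta y \mp r = \gamma\}$, so that a tangency is exactly a point-plane incidence. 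In this model the $N = mt$ points lie on only $m$ lines parallel to the $r$-axis, and all $2n$ planes have normals making a fixed angle with the $r$-axis, i.e.\ they are tangent to a common cone. These two structural features are what must be exploited.

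Finally I would bound the incidences. Applying the Szemer\'edi--Trotter bound to a single radius $d_i$ (after shifting each line by $\pm d_i$ this is a point-line incidence count) gives $e_i = O(m^{2/3}n^{2/3}+m+n)$ per distance, and summing over $i$ yields only $t = \Omega(m^{1/3}n^{1/3})$; applying the Pach--Sharir curve bound $O(P^{3/5}C^{4/5}+P+C)$ to the full family of $2mt$ curves gives only $t = \Omega(m^{1/4}n^{1/2})$. Both estimates coincide with $m^{1/5}n^{3/5}$ at $n = m^{1/2}$ but are strictly weaker throughout the open range, which is precisely why the hypothesis $m^{1/2}\le n\le m^2$ is present. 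To reach the stated bound one must prove a genuinely stronger tangency estimate, of the form $T = O\big((mt)^{5/6}n^{1/2}\big)$ in this range, by using the structure isolated above: the points concentrated on $m$ vertical lines together with the cone-tangency of the planes. Substituting $T \ge mn$ and solving $mn \le c\,(mt)^{5/6}n^{1/2}$ for $t$ gives exactly $D(m,n) = \Omega(m^{1/5}n^{3/5})$. I expect this refined incidence bound to be the crux of the proof, most plausibly obtained through polynomial partitioning in $\R^3$ adapted to the cone, with the planar Pach--Sharir and Szemer\'edi--Trotter estimates re-entering to handle the partition cells and the lower-dimensional (line-contained) parts of the configuration.
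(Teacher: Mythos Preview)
Your reduction to line--circle tangencies and your diagnosis of why the naive Pach--Sharir bound only yields $\Omega(m^{1/4}n^{1/2})$ are both correct and match the paper exactly. You have also correctly reverse-engineered a tangency inequality that would imply the theorem. The problem is that the proof stops precisely where the work begins: you assert that a bound of the shape $T = O\big((mt)^{5/6}n^{1/2}\big)$ should follow from ``polynomial partitioning in $\R^3$ adapted to the cone,'' but you give no argument, and in fact no such argument is known. Point--plane incidences in $\R^3$ with points on $m$ lines and planes tangent to a cone do not obviously yield this bound; the difficulty is that many planes can contain the same vertical line, so the $K_{s,t}$-freeness needed for a partitioning argument fails in exactly the structured way your configuration exhibits.

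The paper's route is entirely different and avoids three-dimensional partitioning. It stays in the dual plane, where the $n$ lines become points and the $mt$ circles become $2mt$ hyperbola branches, and it adapts Sz\'ekely's crossing-number method: draw the topological multigraph on the $n$ dual points whose edges are arcs between consecutive points along each branch, and split the edges by multiplicity at a threshold~$s$. Low-multiplicity edges are bounded by the crossing lemma, using that any two branches meet at most twice, so the crossing number is $O(m^2t^2)$. High-multiplicity edges correspond, back in the primal plane, to pairs of lines with many common tangent circles; but the centres of such circles lie on an angular bisector of the pair, so one is really counting incidences between the $m$ points and the (few) bisector lines containing at least $s/2$ of them --- a straight Szemer\'edi--Trotter application. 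Balancing the two pieces with $s \asymp m^{4/7}t^{1/7}/n^{2/7}$ gives $T = O(m^{6/7}n^{4/7}t^{5/7} + \cdots)$, and comparing with $T = mn$ yields $t = \Omega(m^{1/5}n^{3/5})$. The structural ingredient you isolated --- that the circles come in $m$ concentric families --- is exactly what makes the high-multiplicity analysis reduce to a point--line problem, but it is exploited via bisectors and Szemer\'edi--Trotter, not via a 3D incidence bound.
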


Our proof also yields a stronger statement: For any set $P$ of $m$ points,
and any set $L$ of $n$ lines in the plane,
with $m^{1/2}\le n\le m^2$,
there always exists a point $p \in P$ such
that the number of distinct distances from $p$ to $L$ satisfies the bound in Theorem \ref{thm:main1}.

We note that the upper bound $D(m,n)\leq \lceil n/2 \rceil$ is easy to achieve by the following construction.
Place $n$ parallel lines, say the horizontal lines $y = j$ for integers $j=1,\ldots,n$.
Place all points on the line $y = n/2+1/2$. Since all points on the median line have the same distance
from any given horizontal line, the number of distinct point-lines distances is $\lceil n/2 \rceil$.
Note that this upper bound does indeed dominate the bound in Theorem~\ref{thm:main1}, as long as $m^{1/2}\le n$.

As a first step towards Theorem \ref{thm:main1}, we study the problem of bounding from above the number of tangencies between $n$ lines and $k$ circles; see Theorem \ref{thm:main3} below.
This results in a bound that is somewhat weaker than that in Theorem \ref{thm:main1}.
To arrive at the stronger bound in Theorem \ref{thm:main1}, we adapt an idea used by Sz\'ekely \cite{S} to prove the lower bound $\upOmega(n^{4/5})$ for the number of distinct point-point distances determined by $n$ points.

The bound of Sz\'ekely was improved by Solymosi and T\'oth \cite{ST} and by Guth and Katz \cite{GK},
but it appears difficult to adapt their proofs to the point-line distances setup.
Let us explain the problem in the case of the Guth-Katz proof.
A key step in that proof is a transformation from a pair of points $p,q$ to the set of rigid motions that map $p$ to $q$.
In the right parametrization, this set is a line in $\R^3$, and distinct pairs of points correspond to distinct lines.
To do the same for point-line distances, we would transform a pair of lines $\ell,\ell'$ to the rigid motions that map $\ell$ to $\ell'$,
which is again a line in $\R^3$.
However, it is not true that distinct pairs of lines always yield distinct lines in $\R^3$, and this ruins the Guth-Katz approach.
It thus appears that the point-line distances problem is in some sense harder, and obtaining a better bound seems to require significant new ideas.


\paragraph{Distinct distances between points and their spanned lines.}
We also study the number of distinct point-line distances between a finite set of non-collinear
points (that is, not all points lie on a common line) and the set of lines that they span.
This question has a different flavor, because the number of lines spanned by $m$ non-collinear points varies from $m$ to $\binom{m}{2}$.
When the points span many lines,
Theorem \ref{thm:main1} provides a reasonable bound,
which can be as large as $\upOmega(m^{7/5})$,
but when the points span few lines,
the resulting bound is a relatively weak $\upOmega(m^{4/5})$.
We use a different approach to obtain a better overall bound.

We write $H(m)$ for the minimum number of distances between $m$ points in $\R^2$, not all collinear, and the lines spanned by these points.
Note that $H(m)$ is also the minimum number of heights occurring in the triangles determined by a set of $m$ non-collinear points.

\begin{theorem}\label{thm:main2}
The minimum number $H(m)$ of distances between $m$ non-collinear points and their spanned lines satisfies
$$H(m) = \upOmega\left(m^{4/3}\right).$$
\end{theorem}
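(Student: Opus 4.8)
The plan is to split into two cases according to how many lines the $m$ points span, using Beck's theorem. Beck's theorem guarantees that either some line contains $\upOmega(m)$ of the points, or the points span $\upOmega(m^2)$ lines. In the second case I am done immediately: applying Theorem~\ref{thm:main1} with $n = \upOmega(m^2)$ spanned lines gives $\upOmega\!\left(m^{1/5}(m^2)^{3/5}\right) = \upOmega(m^{7/5})$, which already exceeds $m^{4/3}$. So the entire difficulty is concentrated in the first case, where a single line $\ell_0$ carries a set $P_0$ of $s = \upOmega(m)$ of the points. Here Theorem~\ref{thm:main1} is too weak (it can give as little as $m^{4/5}$ when the points span few lines), and I would instead exploit the collinearity directly, using essentially a \emph{single} point off $\ell_0$.

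Set up coordinates so that $\ell_0$ is the $x$-axis, and let $X \subset \R$ be the set of $x$-coordinates of $P_0$, so $|X| = s = \upOmega(m)$. Since the points are not all collinear, there is at least one point $q = (u,v)$ with $v \neq 0$. For collinear points $p = (x_p,0)$ and $p' = (x',0)$ in $P_0$, a direct computation gives the distance from $p'$ to the spanned line $\ell_{pq}$ through $p$ and $q$,
\[ \mathrm{dist}(p', \ell_{pq}) = \frac{v\,|x' - x_p|}{\sqrt{v^2 + (x_p - u)^2}}, \]
so the squared distances occurring among these genuine point--line pairs are exactly the values of the rational function
\[ F(x_p, x') = \frac{v^2 (x' - x_p)^2}{v^2 + (x_p - u)^2} \]
on the Cartesian product $X \times X$. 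Since $d \mapsto d^2$ is injective on distances, it suffices to show that $F$ attains $\upOmega(s^{4/3}) = \upOmega(m^{4/3})$ distinct values on $X \times X$, using only the one external point $q$.

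For this I would invoke the sharp Elekes--R\'onyai theorem, in the form established by Raz, Sharir and de Zeeuw for bivariate polynomials and rational functions: a bivariate rational function of bounded degree attains $\upOmega(s^{4/3})$ distinct values on an $s \times s$ grid $X \times X$, unless it has one of the exceptional \emph{additive} or \emph{multiplicative} forms $g(\varphi(x_p) + \psi(x'))$ or $g(\varphi(x_p)\,\psi(x'))$. Our $F$ has numerator and denominator of degree at most two, so the theorem applies with an absolute constant, and establishing the claim reduces to ruling out these two special forms. A minor point to dispose of along the way is the diagonal $x' = x_p$, where $p'$ lies on $\ell_{pq}$ and the distance is $0$; removing it discards at most $s$ pairs and does not affect the bound.

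Verifying that $F$ is \emph{not} of either exceptional form is the crux, and I expect it to be the main obstacle. The telling sanity check is the limit $v \to \infty$: there the pencil of lines through $q$ degenerates into a family of parallel (vertical) lines, $F$ collapses to $(x' - x_p)^2$, which is additive-special, and indeed only $\upOmega(s)$ distinct distances survive. The task is therefore to show that for every genuine (finite) point $q$ the factor $1/(v^2 + (x_p - u)^2)$ destroys both special forms simultaneously --- for instance by checking that the relevant Elekes--R\'onyai differential, a Wronskian-type expression in the partial derivatives of $F$, does not vanish identically whenever $v$ is finite and nonzero. Once this non-degeneracy is confirmed, the Elekes--R\'onyai bound yields $\upOmega(m^{4/3})$ distinct point--line distances, completing the collinear case and hence the theorem.
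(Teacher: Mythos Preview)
Your overall architecture---Beck's theorem to split into ``many lines'' (handled by Theorem~\ref{thm:main1}) versus ``a line with $\Omega(m)$ points,'' then reducing the collinear case to the image size of a specific bivariate rational function---is exactly the paper's strategy. The divergence is in how the collinear case is finished.

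You want to quote a sharp Elekes--R\'onyai theorem for \emph{rational} functions and then verify that $F$ is not of additive or multiplicative special form, a verification you explicitly defer. There are two problems. First, the paper itself notes that the $\Omega(m^{4/3})$ black box of Raz--Sharir--Solymosi is stated for \emph{polynomials}; for rational functions the only result cited is the original Elekes--R\'onyai superlinear bound, so the rational-function black box you invoke is not simply available and would itself need an argument. Second, and more to the point, the special-form check you postpone is precisely where the work lies, and your Wronskian sketch does not constitute a proof.

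The paper sidesteps both issues by running the Sharir--Sheffer--Solymosi quadruple argument directly on this particular $f(x,y)=(x-y)^2/(1+y^2)$: set $Q=\{(x,y,x',y'):f(x,y)=f(x',y')\}$, so $|f(W)|\ge k^4/|Q|$ by Cauchy--Schwarz, and interpret $Q$ as incidences between the $k^2$ points of $W\times W$ and the curves $C_{ij}=\{f(z,x_i)=f(z',x_j)\}$. The ``miracle'' specific to this $f$ is that each $C_{ij}$ is a union of two explicit \emph{lines} $z'-x_j=\pm A_{ij}(z-x_i)$ with $A_{ij}=\sqrt{(1+x_j^2)/(1+x_i^2)}$, and a short calculation shows these $O(k^2)$ lines are distinct. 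So Szemer\'edi--Trotter gives $|Q|=O(k^{8/3})$ and hence $|f(W)|=\Omega(k^{4/3})$. This is self-contained, uses nothing beyond Szemer\'edi--Trotter, and implicitly \emph{is} the non-degeneracy verification you were postponing: the distinctness of the lines $L_{ij}^{\pm}$ plays exactly the role that ``$F$ is not special'' plays in the black-box approach.
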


An upper bound $H(m)\leq m^2$ follows from a simple construction.
Place $m-1$ points on a line, and one point off the line.
This configuration spans only $m$ lines, and therefore it determines at most $m^2$ point-line distances.
The same bound also holds for other constructions, like the vertex set of a regular polygon, or an integer grid.

The lower bound in Theorem \ref{thm:main2} is most likely not tight, but it currently seems hard
to improve, for the following reason.
In the extreme configuration with $m-1$ points on a line, say the $x$-axis, and one point off the line,
say at $(0,1)$, it corresponds to a lower bound on the number of distinct values of the rational function
$f(x,y) = (x-y)^2/(1+y^2)$, with $x,y$ from a set $S\subset \R$ of size $m$.
Even for simpler functions, such as bivariate polynomials in $x,y$,
no better bound than $\upOmega(m^{4/3})$ is known (see, e.g., \cite{RSS}).

\paragraph{Tangencies involving lines and circles.}
As mentioned, our proof of Theorem \ref{thm:main1} is based on an analysis of the number of tangencies between lines and circles.
We believe this question to be of independent interest, and we consider it in more detail.

Given a finite set $L$ of lines and a finite set $C$ of circles, we write $T(L,C)$ for the number of \emph{tangencies} between lines from $L$ and circles from $C$, i.e., the number of pairs $(\ell,c)\in L\times C$ such that the line $\ell$ is tangent to the circle $c$.
We prove the following upper bound.

\begin{theorem} \label{thm:main3}
Let $L$ be a set of $n$ lines and $C$ a set of $k$ circles in the plane. Then
$$
T(L,C)=
O\left( n^{2/3}k^{2/3} + n^{6/11}k^{9/11}\log^{2/11} k + k + n \right) .
$$
\end{theorem}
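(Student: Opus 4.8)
The plan is to dualize line--circle tangencies into point--curve incidences in a parameter plane, and then to invoke the sharp incidence bound for points and pseudo-circles. Write each line $\ell\in L$ in its normal form $\ell=\{(x,y):x\cos\theta+y\sin\theta=p\}$, with $\theta\in[0,\pi)$ and $p\in\R$, and map it to the point $\hat\ell=(\theta,p)$ in the parameter plane; this map is a bijection from lines to points of $[0,\pi)\times\R$. A circle $c\in C$ with center $(a,b)$ and radius $r>0$ is tangent to $\ell$ precisely when the signed distance from $(a,b)$ to $\ell$ equals $\pm r$, that is, when $p=a\cos\theta+b\sin\theta\mp r$. I therefore associate to $c$ the two sinusoidal curves
\[
\gamma_c^{\pm}\colon\quad p=a\cos\theta+b\sin\theta\pm r,
\]
and set $\Gamma=\{\gamma_c^{+},\gamma_c^{-}:c\in C\}$, a family of $2k$ curves. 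By construction $\ell$ is tangent to $c$ if and only if $\hat\ell$ lies on $\gamma_c^{+}$ or on $\gamma_c^{-}$, and since $r>0$ exactly one of the two signs occurs for a given tangency. Hence $T(L,C)$ equals the number $I(\widehat{L},\Gamma)$ of incidences between the $n$ points $\widehat{L}=\{\hat\ell:\ell\in L\}$ and the $2k$ curves of $\Gamma$.

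The second step is to check that $\Gamma$ is, for incidence purposes, a family of pseudo-circles. Each curve is the graph of a shifted sinusoid $p=A\cos\theta+B\sin\theta+C$ over $\theta\in[0,\pi)$, so the family has three degrees of freedom: prescribing the value of $p$ at three points in general position yields a linear system in $(A,B,C)$ with at most one solution, so at most one curve of the family passes through any three given points. Moreover, two distinct such curves meet where $(A_1-A_2)\cos\theta+(B_1-B_2)\sin\theta=C_2-C_1$, an equation of the form $R\cos(\theta-\psi)=D$; since $\cos(\theta-\psi)$ has at most one critical point in an interval of length $\pi$, this has at most two solutions, so any two curves of $\Gamma$ cross at most twice. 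These are exactly the combinatorial properties shared by circles (and by pseudo-circles): three degrees of freedom, three points determine at most one curve, and two curves meet at most twice.

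It then remains to apply the incidence bound for points and pseudo-circles. For $M$ points and $N$ pseudo-circles (curves with the three properties above) the crossing-number method of Sz\'ekely, in the refined form developed for circles by Aronov and Sharir and for pseudo-circles via the lens-cutting analysis of Agarwal, Nevo, Pach, Pinchasi, Sharir and Smorodinsky together with the Marcus--Tardos bound, yields
\[
I = O\!\left(M^{2/3}N^{2/3}+M^{6/11}N^{9/11}\log^{2/11}N+M+N\right).
\]
Substituting $M=n$ and $N=2k$, and absorbing constants and the factor $\log^{2/11}(2k)=O(\log^{2/11}k)$, gives the bound claimed in the theorem.

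The main obstacle is this last step rather than the reduction: obtaining the refined exponents $6/11,\,9/11$ and the logarithmic factor, instead of the weaker generic three-degrees-of-freedom bound $O(M^{3/5}N^{4/5}+M+N)$ of Pach and Sharir, which here would only give $O(n^{3/5}k^{4/5}+n+k)$. Exploiting the two-intersection (pseudo-circle) property is therefore essential, and if one wishes to reprove the bound from scratch the crux is precisely the lens-cutting argument: bounding the number of lenses formed by the sinusoids so that they can be cut into few pseudo-segments, after which Sz\'ekely's method applies. One must also attend to minor degeneracies --- the identification of $\theta$ with $\theta+\pi$ in the normal form, the curves $\gamma_c^{+},\gamma_c^{-}$ arising from a common circle (which are disjoint and so cause no double counting), and coincident points --- none of which affects the asymptotic bound.
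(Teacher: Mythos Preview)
Your proof is correct and follows essentially the same approach as the paper: dualize so that lines become points and circles become a $2k$-family of pseudo-parabolas with a three-parameter representation, then apply the Agarwal--Nevo--Pach--Pinchasi--Sharir--Smorodinsky incidence bound. The only difference is cosmetic---the paper uses the slope--intercept dualization $(a,b)\mapsto y=ax+b$, under which the dual curves are hyperbola branches, whereas you use the normal-form dualization $(\theta,p)$, under which they are shifted sinusoids; in both cases the curves are graphs meeting pairwise in at most two points and carry three real parameters, which is all that is needed.
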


We can obtain a lower bound for the maximum number of tangencies using a construction similar to the one that we gave for repeated point-line distances.
Given $m$ points and $n$ lines with $\Theta\left(m^{2/3}n^{2/3}+ m+n\right)$ incidences, replace each point by a circle of radius $1$, and replace each line by a parallel line at distance $1$.
The resulting $n$ lines and $k=m$ circles have $\Theta\left(n^{2/3}k^{2/3}+ k+n\right)$ tangencies.

From Theorem \ref{thm:main3} we can deduce a lower bound on $D(m,n)$, the minimum number of point-line distances, but the resulting bound
(see Corollary \ref{cor:lensbound} below) is weaker than that in Theorem \ref{thm:main1}.
In the proof of Theorem \ref{thm:main1} we improve this bound by exploiting the specific structure of the distinct distances problem.

It is natural to consider the corresponding question for tangencies between circles:
Given $n$ circles in the plane, how many pairs of circles can be tangent?
This is related to the problem of bounding the number of pairwise non-overlapping
\emph{lenses} in an arrangement of $n$ circles in the plane, which is of central significance
in the derivation of the best known bound on the number of point-circle incidences; see \cite{ANPPSS}.
The best known upper bound for both lenses and tangencies is $O(n^{3/2}\log n)$ (see \cite{MT}),
and the best known lower bound for both problems is $\upOmega(n^{4/3})$.
Under appropriate restrictions, we can slightly improve the upper bound to $O(n^{3/2})$ for the circle tangency problem.

\begin{theorem}\label{thm:circles1}
Let $C$ be a family of $n$ circles in $\R^2$ with arbitrary radii.
Assume that no three circles of $C$ are mutually tangent at a common point.
Then $C$ has at most $O(n^{3/2})$ pairs of tangent circles.
\end{theorem}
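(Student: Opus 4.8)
The plan is to reduce the counting of tangent pairs to a crossing-number estimate for a geometric graph on the circles, and then to apply the crossing lemma in the spirit of Sz\'ekely \cite{S}, organizing the argument so that the non-degeneracy hypothesis removes the logarithmic factor present in the general lens/tangency bound of \cite{MT}. Throughout I would assume $n$ is large and that the number $t$ of tangent pairs satisfies $t\ge 4n$, since otherwise $t=O(n)=O(n^{3/2})$ and there is nothing to prove.

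First I would carry out the reductions. Since no three circles of $C$ are mutually tangent at a common point, every tangency point lies on exactly two circles; and since two distinct circles are tangent in at most one point, the number $T(C)$ of tangent pairs equals the number $t$ of tangency points in the arrangement $\mathcal{A}(C)$, so it suffices to prove $t=O(n^{3/2})$. I would then split the tangent pairs into externally and internally tangent ones and treat each family separately (the two types can be interchanged by an inversion, so a single argument suffices). The geometric fact I want to exploit is that for a tangent pair the point of tangency $p$ is collinear with the two centers $o_c,o_{c'}$ and lies on the segment joining them (external case) or between $o_{c'}$ and $o_c$ with one inside the other (internal case); in either case the whole segment $o_c o_{c'}$ is contained in $\mathrm{disk}(c)\cup\mathrm{disk}(c')$, being the concatenation of a radius of $c$ and a radius of $c'$ meeting at $p$.

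Next I would build a graph $G$ whose vertices are the $n$ circles, drawn at their centers, and whose edges are the tangent pairs, each drawn as the straight segment $o_c o_{c'}$ through the corresponding tangency point. This is a straight-line drawing of a graph with $n$ vertices and $t$ edges. The crossing lemma gives $\mathrm{cr}(G)=\Omega\!\left(t^3/n^2\right)$ in our regime $t\ge 4n$, so it suffices to prove that the total number of crossings in this drawing is $O(nt)$: combined, these yield $t^2=O(n^3)$, i.e. $t=O(n^{3/2})$. By the containment above, a crossing between two tangency segments forces an overlap among the associated disks, so crossings can be charged to pairs of circles that cross transversally (equivalently, to lens-vertices of $\mathcal{A}(C)$); this is precisely the structure on which the pseudo-segment cutting of \cite{ANPPSS} and the lens analysis of \cite{MT} operate.

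The main obstacle is exactly this crossing count, and it is where the non-degeneracy assumption must do its work. A crude charging only bounds crossings by the number of crossing circle-pairs \emph{weighted} by how many tangency segments thread each overlap region, and a single overlap can be crossed by many segments, so one does not get $O(nt)$ for free — indeed this is the slack that produces the extra $\log n$ in the general bound (entering through the Marcus--Tardos intersection-reverse-sequence estimate, or equivalently through the recursive cutting of the circles into pseudo-segments, which is lossy when tangencies cluster along a single circle). The hypothesis that no three circles are mutually tangent at one point guarantees that every tangency is a simple contact of exactly two circles, so that distinct tangencies give rise to distinct, pairwise non-overlapping degenerate lenses, each requiring only a single cut; I would use this to rule out the clustered configurations responsible for the logarithm and to bound the number of crossings on any fixed tangency segment by $O(n)$. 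I expect the delicate step to be precisely this per-edge crossing bound, $O(n)$ rather than $O(n\log n)$, since that is the exact point at which the improvement over \cite{MT} is gained and the clean $O(n^{3/2})$ bound emerges.
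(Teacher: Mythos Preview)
Your plan identifies the right target---beating the $O(n^{3/2}\log n)$ bound of Marcus--Tardos by a logarithm---but it does not actually close the gap. The entire argument rests on the claimed $O(nt)$ crossing bound (equivalently, $O(n)$ crossings per tangency segment), and you openly flag this as ``the delicate step'' without supplying a mechanism. The hypothesis you invoke, that distinct tangencies are simple contacts of exactly two circles, only tells you that tangency \emph{points} are distinct; it says nothing about how many other center-to-center segments can cross a fixed one. A single segment $o_co_{c'}$ sits inside $\mathrm{disk}(c)\cup\mathrm{disk}(c')$, but arbitrarily many tangent pairs $(c'',c''')$ can have segments passing through that region, and the non-degeneracy assumption gives you no leverage on this count. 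In short, the log-removal is asserted, not proved, and I do not see how to carry it out along these lines; this is exactly the step on which the Marcus--Tardos argument loses the logarithm, and nothing you have written distinguishes your situation from theirs.

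The paper's proof is entirely different and does not go through crossing numbers at all. It lifts each circle $c$ with center $(a,b)$ and radius $r$ to the point $(a,b,r)\in\R^3$ and to a pair of cones $\sigma_c^{\pm}=\{(x,y,z):(x-a)^2+(y-b)^2=(z\pm r)^2\}$, so that tangency of two circles becomes a point-surface incidence in $\R^3$. The non-degeneracy hypothesis is then used via the Apollonius classification (Lemma~\ref{lem:apollonius}): three circles not mutually tangent at a common point have at most eight common tangent circles, so the tangency graph is $K_{3,9}$-free. Zahl's point-surface incidence bound (Theorem~\ref{thm:zahl}) applied to $n$ points and $2n$ cones then gives $O(n^{3/4}\cdot n^{3/4})=O(n^{3/2})$ directly. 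The key idea you are missing is that the assumption buys a \emph{forbidden-subgraph} condition, not a local crossing estimate.
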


We also consider the following variant of this question, which bounds the number of \emph{tangency points} without any conditions on the circles.

\begin{theorem}\label{thm:circles2}
Given a family $C$ of $n$ circles in $\R^2$ with arbitrary radii,
the number of points where at least two circles of $C$ are tangent is $O(n^{3/2})$.
\end{theorem}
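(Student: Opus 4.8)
The plan is to reduce the problem of counting tangency \emph{points} to the problem of counting tangent \emph{pairs}, and then to invoke Theorem \ref{thm:circles1}. The essential device is a charging map that assigns to each tangency point a single tangent pair, injectively, in such a way that the resulting family of pairs has pairwise distinct tangency points.

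First I would record the local structure at a tangency point. Fix a point $p$ at which at least two circles of $C$ are tangent, and let $C_p\subseteq C$ be the set of all circles of $C$ that are mutually tangent at $p$. All circles of $C_p$ share the same tangent line at $p$, and their centers lie on the normal line to that tangent at $p$; in particular each such circle is determined by the position of its center on this normal line, and two circles on the same side of $p$ have distinct radii. The one elementary fact I will use repeatedly is that two distinct circles are tangent to each other in \emph{at most one} point. Consequently any two circles lie in at most one common set $C_p$, so distinct tangency points $p$ yield distinct sets $C_p$; it therefore suffices to bound the number $N$ of these sets.

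Next I would define the charging. For each tangency point $p$, let $f(p)$ be an (arbitrary but fixed) unordered pair of circles of $C_p$, say the two of smallest radius, with ties broken by a fixed rule. Since the pair $f(p)$ consists of two circles tangent at $p$, its unique common tangency point is exactly $p$; hence the map $f$ is injective, because if $f(p)=f(q)=\{c,c'\}$ then $c$ and $c'$ are tangent both at $p$ and at $q$, forcing $p=q$. Thus $N=|F|$, where $F=\{f(p)\}$ is a family of genuine tangent pairs of circles of $C$, and moreover the pairs of $F$ have pairwise distinct tangency points. In particular no three of the counted pairs meet at a common tangency point, even though the underlying family $C$ may have many circles mutually tangent at a single point. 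I would then bound $|F|$ by $O(n^{3/2})$ by applying to $F$ the argument behind Theorem \ref{thm:circles1}: I expect that this argument bounds the number of counted tangent pairs using only that they realize pairwise distinct tangency points (the hypothesis forbidding three mutually tangent circles serving precisely to guarantee this), so re-running it on $F$ gives $|F|=O(n^{3/2})$ and hence $N=O(n^{3/2})$.

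The hard part will be this last step, namely justifying that the tangent-pair bound of Theorem \ref{thm:circles1} applies to $F$ despite the presence of \emph{heavy} points where arbitrarily many circles of $C$ are mutually tangent. One cannot simply pass to a subfamily of circles that avoids all triple tangencies: a single heavy point may involve many circles, and deleting enough of them to break every triple can destroy a constant fraction of the very tangency points one is trying to count, and organizing such deletions by scale tends to reintroduce a logarithmic factor. The virtue of the charging $f$ is that it deletes no circles at all — it selects exactly one tangent pair per point and outputs a family $F$ of pairs with pairwise distinct tangency points — so the obstacle reduces to checking that the proof of Theorem \ref{thm:circles1} is insensitive to tangencies of $C$ that are not among the counted pairs. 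Verifying this, rather than any new calculation, is what I expect to require the most care.
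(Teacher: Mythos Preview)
Your proposal is correct and is essentially the paper's own argument. The paper also keeps exactly one tangent pair per tangency point (phrased as ``remove all but one edge'' from the tangency graph, yielding a subgraph $T$ identical to your $F$), and then verifies directly that $T$ contains no $K_{3,9}$: if three circles are not mutually tangent at a common point this follows from the Apollonius bound (Lemma~\ref{lem:apollonius}), while if three circles \emph{are} mutually tangent at some point $q$, any common neighbor must also be tangent to them at $q$, and the one-edge-per-point rule then forces at most one such edge in $T$---precisely the ``insensitivity'' check you flagged as the hard part.
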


The same two results are proved in an independent concurrent work by Solymosi and Zahl \cite{SZ}.
Their proof is very different, and it works in greater generality for complex algebraic curves of bounded degree.
Their approach could also be used to obtain the bound $O(n^{3/2})$ on the number of tangencies between $n$ lines and $n$ circles, but this is weaker than our Theorem \ref{thm:main3}.

\paragraph{Distinct point-plane and point-line distances in $\R^3$.}
Finally, we consider two different three-dimensional variants of the question about distinct point-line distances.
Our main purpose is to introduce these problems, display some of the interesting geometry that they involve, and establish the first non-trivial bounds.
We start with \emph{point-plane} distances.

We first note that, without further assumptions, there is no non-trivial lower bound
on the number of distinct distances determined by $m$ points and $n$ planes.
Indeed, let $\Pi$ be a set of $n$ planes all containing a line $\ell$,
and let $P$ be a set of $m$ points on $\ell$.
Every point of $P$ is at distance $0$ from any plane in $\Pi$, so this configuration determines only one distance.
The construction can be modified to yield a set $P$ of $m$ points and a set $\Pi$ of $n$ planes, with only
one \emph{positive} point-plane distance, by placing all the points on the axis of a cylinder $C$,
and by choosing all the planes tangent to $C$.
Note also that the same example demonstrates that
there is no non-trivial upper bound on the number of repeated point-plane distances (without further assumptions).

We call a configuration of $s$ points and $t$ planes an $s\times t$  \emph{cone configuration} (resp., an $s\times t$ \emph{cylinder configuration})
if the $s$ points lie on a line $\ell$, and the $t$ planes are all tangent to the same cone (resp., cylinder) with axis $\ell$.
The next theorem shows that if we exclude such configurations, then there is a non-trivial lower bound on the number of distinct point-plane distances.
Note that by the example above, cylinder configurations have to be excluded to get a non-trivial bound.
However, this does not seem to be the case for cone configurations; these show up as an artefact of our proof (see Section \ref{sec:pointplane} for details).

\begin{theorem}
\label{thm:3Dplanes}
Let $P$ be a set of $m$ points and let $\Pi$ be a set of $n$ planes in $\Re^3$.
Assume that $P$ and $\Pi$ do not determine a $3\times 3$ cone or cylinder configuration.
Then the number of distinct point-plane distances determined by $P$ and $\Pi$ is
\[\upOmega\left(\min\left\{m^{1/3}n^{1/3},n \right\}\right),\]
unless $m=O(1)$.
\end{theorem}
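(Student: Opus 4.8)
The plan is to reduce the distinct point–plane distance question to a point–plane incidence problem in $\R^3$. Let $t$ denote the number of distinct distances realized by $P$ and $\Pi$, with values $r_1,\dots,r_t\ge 0$. For a point $p$ and a plane $\pi$, one has $\mathrm{dist}(p,\pi)=r_i$ precisely when $p$ lies on one of the (at most two) planes parallel to $\pi$ at distance $r_i$. I would therefore form the set $\Sigma$ of all these shifted planes, ranging over all $\pi\in\Pi$ and all values $r_i$, so that $|\Sigma|\le 2nt$. Since every pair $(p,\pi)\in P\times\Pi$ realizes some distance $r_i$, the point $p$ lies on a corresponding plane of $\Sigma$, and hence the number of incidences satisfies $I(P,\Sigma)\ge mn$.

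Next I would bound $I(P,\Sigma)$ from above by a point–plane incidence bound in $\R^3$. The natural tool is the Elekes–Tóth incidence bound for not-too-degenerate hyperplanes, which in $\R^3$ reads $I=O\!\left((MN)^{3/4}+M+N\right)$ for $M$ points and $N$ planes, provided no plane of the configuration is degenerate. Taking $M=m$ and $N=|\Sigma|\le 2nt$ and combining with $I(P,\Sigma)\ge mn$ gives $mn=O\!\left((mnt)^{3/4}+m+nt\right)$. When the first term dominates this yields $t=\upOmega\!\left(m^{1/3}n^{1/3}\right)$, and a short case analysis of the lower-order terms $m$ and $nt$, together with the trivial observation that $t\le n\cdot m$ and that $t\ge\upOmega(n)$ cannot be improved by this method once $m$ is very large, produces the stated bound $\upOmega\!\left(\min\{m^{1/3}n^{1/3},\,n\}\right)$ along with the proviso $m=O(1)$.

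The hard part will be verifying the non-degeneracy hypothesis, and this is exactly where the excluded configurations enter. The incidence bound can fail only in the presence of a degenerate plane $\sigma\in\Sigma$, most of whose incident points of $P$ lie on a single line $\ell\subset\sigma$. Unwinding the construction, such a line carries several points of $P$, and since $\sigma$ is a parallel translate of some $\pi\in\Pi$, the line $\ell$ is parallel to several planes of $\Pi$; the corresponding $3\times 3$ submatrix of point–plane distances is then highly structured (its columns are constant, and the symmetric bookkeeping of the degenerate incidences produces the constant-row situation as well). I would then show that any such structured $3\times 3$ distance pattern forces the three points to be collinear and the three planes to be tangent to a common cylinder or cone about that line — that is, precisely a $3\times 3$ cylinder or cone configuration. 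Excluding these by hypothesis thus restores the non-degeneracy needed to apply the incidence bound.

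Establishing this geometric dictionary between incidence degeneracies and the forbidden configurations is the crux of the argument, and I expect it to be the main obstacle. The cylinder configurations are a genuine obstruction, as the all-equal distance matrix they induce really does defeat any lower bound; the cone configurations, by contrast, do not truly obstruct the distance estimate but are forced out by the way the degenerate incidences must be accounted for, which is why they appear only as an artefact of the proof.
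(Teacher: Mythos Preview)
Your reduction to point--plane incidences has a genuine gap that the paper explicitly anticipates and avoids (see the opening paragraph of Section~\ref{sec:pointplane}).

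The inequality $I(P,\Sigma)\ge mn$ is false, because shifted planes arising from different pairs $(\pi,r_i)$ can coincide, so that many distinct pairs $(p,\pi)\in P\times\Pi$ yield the \emph{same} incidence in $P\times\Sigma$. Concretely, put the $m$ points of $P$ on the $z$-axis $\ell$ and take the $n$ planes $\pi_j:\{x=j\}$ for $j=1,\dots,n$. There is no $3\times 3$ cone or cylinder configuration here: three planes tangent to a common cylinder with axis $\ell$ would have to lie at equal distance from $\ell$, and planes parallel to $\ell$ are never tangent to a genuine cone with axis $\ell$. Yet for every pair $(p,\pi_j)$ the realized distance is $j$, and the relevant shifted plane is $\{x=0\}$, a \emph{single} plane independent of $j$. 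Thus $I(P,\Sigma)=m$, not $mn$.

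Working with $\Sigma$ as a multiset does not save the argument, and this is where your proposed dictionary between degeneracies and forbidden configurations breaks down. The degeneracy you must exclude for an Elekes--T\'oth bound is ``many points of $P$ on a line $\ell$ and many planes of $\Sigma$ containing $\ell$''. But a shifted plane contains $\ell$ precisely when the original plane is \emph{parallel} to $\ell$, with no constraint whatsoever on the distance. So the degenerate situation is ``points on $\ell$ and planes parallel to $\ell$'', whereas the hypothesis only forbids the far more restrictive ``points on $\ell$ and planes tangent to one fixed cone or cylinder with axis $\ell$''. Rotating the planes in the example above so that $\pi_j$ has normal $(\cos\theta_j,\sin\theta_j,0)$ and distance $d_j$ from $\ell$, with the $\theta_j$ and $d_j$ distinct, gives $n$ \emph{distinct} shifted planes all containing $\ell$, hence $mn$ incidences concentrated on a single line, while still avoiding every $3\times 3$ cone or cylinder configuration. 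Your ``constant rows'' claim is simply not implied by the incidence degeneracy.

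The paper's proof sidesteps both issues by drawing \emph{spheres} around the points rather than shifting the planes: distinct $(p,r_i)$ always give distinct spheres, so $T(\Pi,S)=mn$ holds exactly, and Lemma~\ref{lem:threespheres} translates the forbidden $3\times 3$ configurations into the absence of $K_{3,9}$ and $K_{9,3}$ in the tangency graph. After dualizing (planes to points, spheres to quadric surfaces), Zahl's bound then applies.
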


The best upper bound for the minimum number of point-plane distances that we have is $n$, from the following construction.
Let $\Pi$ be a set of $n$ parallel planes, and let $P$ be a set of $m$ arbitrary points on one of the planes in $\Pi$.
Then each point of $P$ determines the same set of at most $n$ distances to the planes of $\Pi$.
Clearly, a large gap remains between this upper bound and the lower bound in Theorem \ref{thm:3Dplanes} (when $m<n^2$).

Next we consider distinct \emph{point-line} distances in $\R^3$.
Let us say that $s$ points and $t$ lines form an $s\times t$ \emph{cone/cylinder/hyperboloid configuration} if the $s$ points lie on a line $\ell$, and the $t$ lines are all contained in the same cone/cylinder/one-sheeted hyperboloid with axis $\ell$.
Note that these quadrics are \emph{ruled surfaces}.
Again a cylinder configuration determines only one point-line distance,
while the other configurations appear to be artefacts of our proof.

\begin{theorem}
\label{thm:3Dlines}
Let $P$ be a set of $m$ points and $L$ a set of $n$ lines in $\Re^3$.
Assume that $P$ and $L$ do not determine any $4 \times 4$ cone/cylinder/hyperboloid configuration.
Then the number of distinct point-line distances determined by $P$ and $L$ is
\[\upOmega\left(\min\left\{m^{1/4}n^{1/4-\eps},n\right\}\right),\]
for any $\eps>0$ (where the constant of proportionality depends on $\eps$), unless $m=O(1)$.
\end{theorem}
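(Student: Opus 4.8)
The plan is to follow the same high-level strategy as in the planar and point-plane cases: assume that $P$ and $L$ determine only $t$ distinct distances and bound $t$ from below by an incidence argument, the new feature being that the relevant equidistance loci are now \emph{quadric surfaces} rather than lines or planes. Since the claimed bound is $\min\{m^{1/4}n^{1/4-\eps},n\}$, if $t\ge n$ we are already done; so I would assume throughout that $t<n$, and aim to prove $t=\Omega(m^{1/4}n^{1/4-\eps})$ in this regime.

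First I would set up the counting. For each point $p\in P$ the $n$ distances from $p$ to the lines of $L$ take at most $t$ values, so the lines of $L$ split into at most $t$ distance-classes for $p$; by Cauchy--Schwarz the number of \emph{equidistant pairs} $(\ell,\ell')$ for $p$, i.e.\ pairs with $\mathrm{dist}(p,\ell)=\mathrm{dist}(p,\ell')$, is $\Omega(n^2/t)$ (using $t<n$). Summing over $P$, the number of \emph{equidistant triples} $(p;\ell,\ell')$ is $\Omega(mn^2/t)$.

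Next comes the geometric reduction. For fixed lines $\ell,\ell'$, the squared distance $\mathrm{dist}(p,\ell)^2$ is a quadratic polynomial in the coordinates of $p$, so the equidistance locus $\{p:\mathrm{dist}(p,\ell)^2=\mathrm{dist}(p,\ell')^2\}$ is the zero set of a polynomial of degree at most two, a \emph{bisector quadric} $Q_{\ell,\ell'}$. Hence the number of equidistant triples equals the number of incidences $I(P,\mathcal Q)$ between $P$ and the family $\mathcal Q$ of $\binom{n}{2}$ bisector quadrics, giving $\Omega(mn^2/t)\le I(P,\mathcal Q)$. Bounding $I(P,\mathcal Q)$ from above by an incidence bound of Solymosi--Tao type for points and bounded-degree surfaces in $\R^3$ (which is where the $\eps$ loss originates) and solving for $t$ will yield $t=\Omega(m^{1/4}n^{1/4-\eps})$; combined with the dichotomy $t<n$ above, this produces the $\min$ with $n$.

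The role of the non-degeneracy hypothesis, and the main obstacle, is precisely to make such an incidence bound applicable. The danger is clusters of bisector quadrics sharing a common \emph{rich} line. Here I would prove the key structural fact that a line $\ell$ is contained in $Q_{\ell_j,\ell_k}$ if and only if $\ell_j$ and $\ell_k$ have the same squared-distance profile along $\ell$ (the quadratic map $z\mapsto\mathrm{dist}(p(z),\ell_j)^2$ on $\ell$), and that this in turn holds if and only if $\ell_j$ and $\ell_k$ are rulings of a common ruled quadric of revolution --- a cylinder, cone, or one-sheeted hyperboloid --- with axis $\ell$. Consequently, a line $\ell$ carrying four points of $P$ and lying in the bisectors of four lines of $L$ is exactly a $4\times4$ cone/cylinder/hyperboloid configuration, so excluding these bounds, for every $P$-rich line, the number of same-profile lines and hence the multiplicity of bisectors through that line --- which is exactly the clustering the surface-incidence machinery must avoid. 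The delicate part, which I expect to be the hardest, is handling the degenerate bisectors: when $\ell_j,\ell_k$ are parallel or meet, $Q_{\ell_j,\ell_k}$ degenerates to a plane or a cone, may be reducible, and may carry many points; verifying that the $4\times4$ exclusion controls all of these degeneracies well enough for the incidence bound to run is the crux. This is also the reason why, in contrast with the clean plane-bisector situation of Theorem~\ref{thm:3Dplanes}, the exponent obtained here is only the weaker $m^{1/4}n^{1/4-\eps}$.
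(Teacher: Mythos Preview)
Your approach is genuinely different from the paper's, and as written it has a real gap.

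\textbf{What the paper does.} The paper does \emph{not} pass to bisector quadrics. Instead it draws, around each point of $P$, spheres of the occurring radii (at most $mt$ spheres in total), so that the $mn$ point-line distance pairs become $mn$ sphere-line tangencies. It then lifts to $\R^4$: each sphere becomes a point $(a,b,c,r^2)$, and each line $\ell$ becomes a paraboloid $V_\ell$ consisting of all such points for spheres tangent to $\ell$. The crucial non-degeneracy fact is Borcea--Goaoc--Lazard--Petitjean: four spheres with non-collinear centers have at most twelve common tangent lines, and in the collinear case the common tangents lie on a cone/cylinder/one-sheeted hyperboloid with that axis. The $4\times4$ hypothesis therefore rules out any $K_{4,M}$ in the sphere-line tangency graph, and the Fox--Pach--Sheffer--Suk--Zahl bound in $\R^4$ with $s=4$ gives $T(L,S)=O\big(n^{4/5+\eps}(mt)^{4/5}+n+mt\big)$; equating with $mn$ yields $t=\Omega(m^{1/4}n^{1/4-\eps})$.

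\textbf{Where your argument breaks.} Your Cauchy--Schwarz count $I(P,\mathcal Q)\ge\Omega(mn^2/t)$ is fine, and your structural observation --- that a line $\ell_0$ lies in $Q_{\ell_i,\ell_j}$ iff $\ell_i,\ell_j$ share the same distance-profile along $\ell_0$, which forces them onto a common ruled quadric of revolution with axis $\ell_0$ --- is correct. But this does \emph{not} give the $K_{s,O(1)}$-freeness a Solymosi--Tao/Fox-et-al.\ bound needs. Fix four points $p_1,\dots,p_4\in P$. A bisector $Q_{\ell,\ell'}$ passes through all four iff $\ell$ and $\ell'$ have the same distance-profile $(d(p_k,\cdot))_{k=1}^4$. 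The $4\times4$ hypothesis (together with Borcea et al.\ when the $p_k$ are non-collinear) only tells you that each profile class contains at most a bounded number of lines of $L$; it does \emph{not} bound the number of profile classes. Hence the number of same-profile pairs can be as large as $\Theta(n)$, so $\Theta(n)$ bisector quadrics may pass through four given points. You therefore only get $K_{4,O(n)}$, not $K_{4,O(1)}$, and no off-the-shelf point-surface incidence bound in $\R^3$ then yields $I(P,\mathcal Q)=O(m^{3/4}n^{7/4})$, which is what you would need for $t=\Omega(m^{1/4}n^{1/4-\eps})$. The paper avoids this entirely: by counting sphere-line tangencies rather than point-bisector incidences, the ``four spheres have $O(1)$ common tangent lines'' statement is exactly the $K_{4,O(1)}$ input required, and the $4\times4$ exclusion is needed only to handle the collinear-centers exception in Borcea et al.
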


Taking $n$ parallel lines and $m$ points on one of these lines, we get a configuration with at most $n$ distinct point-line distances.

Finally, let us mention as an open problem one further variant in $\R^3$.
One can ask for a lower bound on the minimum number of \emph{line-line distances} determined by a finite set of lines in $\R^3$.
If the lines all lie in a plane or a cone, then they may all intersect, and thus determine only one line-line distance.
We believe that if no three lines are in a plane or cone, then a non-trivial lower bound should hold.
We could also consider a bipartite version of this problem, where we have two sets of lines and we consider only distances between a line from one set and a line from the other.
Here there is another degenerate configuration, if the two sets of lines are subsets of the two rulings of a doubly-ruled surface (see \cite{GK} for definitions).
In this case there is only one bipartite line-line distance, because every line from one ruling intersects every line from the other ruling (even though the lines within a single ruling do not intersect, and in fact determine many line-line distances).

\paragraph{Outline.}
In Section \ref{sec:prelims} we introduce the incidence bounds that are key tools in our proofs, and we state a number of geometric facts that are needed to apply the incidence bounds.
In Section \ref{sec:pointline}, we prove Theorem \ref{thm:main1} on point-line distances, and along the way we prove Theorem \ref{thm:main3} on line-circle tangencies.
Section \ref{sec:spanned} then concerns Theorem \ref{thm:main2} on distances between points and spanned lines, and Section \ref{sec:circletangs} treats tangencies between circles.
Finally, Section \ref{sec:highdim} considers point-plane and point-line distances in $\R^3$.

\paragraph{Acknowledgement.}
We would like to thank Joshua Zahl for helpful discussions concerning the problems studied in this paper, and the referees of an earlier version of this paper for their helpful comments.


\section{Preliminaries}
\label{sec:prelims}

\subsection{Incidence bounds}
\label{subsec:incs}

Given a finite set $P$ of points and a finite set $\mathcal{O}$ of geometric objects, the \emph{incidence graph} of
$P\times \mathcal{O}$ is the bipartite graph with vertex sets $P$ and $\mathcal{O}$, and an edge between $p\in P$ and $O\in\mathcal{O}$ if $p\in O$.
The number of incidences between $P$ and $\mathcal{O}$, denoted by $I(P,\mathcal{O})$, is the number of edges in this graph.
The following incidence bound of Pach and Sharir \cite{PS98} generalizes the theorem of Szemer\'edi and Trotter \cite{ST83} mentioned in the introduction.
The version stated here, which is a special case of the more general result of \cite{PS98}, can be found in \cite{KMS}.

\begin{theorem}[Pach-Sharir]\label{thm:pachsharir}
Let $P\subset\R^2$ be a set of points and let $\mathcal{C}$ be a set of algebraic curves in $\R^2$ whose degree is bounded by a constant.
Assume that the incidence graph of $P\times \mathcal{C}$ contains no $K_{s,M}$, for some constant $M$.
Then
\[I(P,\mathcal{C}) = O\left(|P|^{\frac{s}{2s-1}}|\mathcal{C}|^{\frac{2s-2}{2s-1}} + |P|+|\mathcal{C}| \right). \]
\end{theorem}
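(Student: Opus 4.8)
The plan is to prove this bound by Sz\'ekely's crossing-number method, whose geometric engine is the crossing lemma: any simple graph drawn in the plane with $V$ vertices and $E \ge 4V$ edges has at least $cE^3/V^2$ crossings. First I would dispose of the trivial regime by assuming $I(P,\mathcal{C}) \ge C(|P|+|\mathcal{C}|)$ for a large constant $C$, since otherwise the claimed bound already holds. I would also reduce to curves with no common component: decomposing each bounded-degree curve into its $O(1)$ irreducible components changes both $I(P,\mathcal{C})$ and the $K_{s,M}$-free property only by constant factors, and the hypothesis forbids any irreducible curve carrying $s$ points of $P$ from being a component of $M$ or more of the curves, so repeated rich components can be absorbed into the constants. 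After these reductions, any two distinct remaining curves meet in $O(1)$ points by B\'ezout's theorem, because their degrees are bounded by a constant.

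The core construction is a drawing $G$ with vertex set $P$: on each curve $C$ I order the incident points along the $O(1)$ connected arcs of $C$ and join consecutive points by the sub-arc between them. Writing $i_C = |P\cap C|$, this creates $E = \sum_C (i_C - O(1)) = I(P,\mathcal{C}) - O(|\mathcal{C}|)$ edges, so $E = \Theta(I(P,\mathcal{C}))$ after the pruning above. Two edges drawn along distinct curves cross only at an intersection point of those curves, of which there are $O(1)$ by B\'ezout, so $G$ has at most $O(|\mathcal{C}|^2)$ crossings. For $s=2$ this already closes the argument: the $K_{2,M}$-free condition says any two points lie on at most $M-1$ common curves, so $G$ has bounded edge multiplicity, discarding parallel edges costs only a constant factor, and the crossing lemma yields $I(P,\mathcal{C})^3/|P|^2 = O(|\mathcal{C}|^2)$, that is, $I(P,\mathcal{C}) = O(|P|^{2/3}|\mathcal{C}|^{2/3}+|P|+|\mathcal{C}|)$, matching the exponents $\tfrac{s}{2s-1}=\tfrac{2s-2}{2s-1}=\tfrac{2}{3}$.

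The main obstacle is the general case $s>2$, where the $K_{s,M}$-free condition no longer bounds the number of curves through a single pair of points, so $G$ may have large edge multiplicity and the clean crossing step breaks down. To circumvent this I would combine the crossing estimate with the Kővári--S\'os--Tur\'an / Zarankiewicz bound that the hypothesis supplies directly, namely $\sum_C \binom{i_C}{s} \le (M-1)\binom{|P|}{s} = O(|P|^s)$, which by convexity gives the weaker global estimate $I(P,\mathcal{C}) = O\bigl(|P|\,|\mathcal{C}|^{1-1/s} + |\mathcal{C}|\bigr)$. I would then run a dyadic decomposition over curve richness, grouping curves into classes with $i_C \in [2^j,2^{j+1})$, apply a crossing-number argument within each class and the moment bound across classes, and sum. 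Balancing the crossing term against the Zarankiewicz term is exactly what interpolates to the exponents $\tfrac{s}{2s-1}$ and $\tfrac{2s-2}{2s-1}$. Carrying out this balancing while keeping the effective multiplicity under control inside each richness class is the technically delicate part, and the step I expect to demand the most care.
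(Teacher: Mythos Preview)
The paper does not prove this theorem at all: it is quoted as a known result of Pach and Sharir \cite{PS98}, in the form given in \cite{KMS}, and is used as a black box throughout. So there is no ``paper's own proof'' to compare your proposal against.

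As for your sketch itself: the $s=2$ case via Sz\'ekely's crossing-number method is standard and your outline is correct. For general $s>2$, however, the crossing-number approach is not the route taken in either \cite{PS98} (which uses cuttings) or \cite{KMS} (which uses polynomial partitioning), precisely because of the multiplicity obstacle you identify. Your proposed fix---a dyadic decomposition by curve richness combined with the K\H{o}v\'ari--S\'os--Tur\'an moment bound---is plausible in spirit, but you have not actually carried it out, and you explicitly flag the balancing step as the part you ``expect to demand the most care.'' That step is genuinely nontrivial: within a single richness class the $K_{s,M}$-free hypothesis still does not bound the multiplicity of an edge between two fixed points, so the crossing lemma cannot be applied directly to the restricted graph either. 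One known way to make a crossing-type argument work for $s>2$ is to pass to an $s$-uniform hypergraph or to count $s$-tuples rather than pairs, which is a different combinatorial object from what you describe. As written, your proposal for $s>2$ is a reasonable heuristic but not yet a proof.
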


The following theorem of Agarwal et al. \cite{ANPPSS} gives an improvement on the Pach-Sharir bound for a specific class of curves.
In the language of \cite{ANPPSS}, a family of \emph{pseudo-parabolas} is a family of graphs of everywhere defined
continuous univariate functions, so that each pair intersect in at most two points.
A collection of pseudo-parabolas \emph{admits a three-parameter representation} if the curves have three degrees of freedom, and can thus be identified with points in $\R^3$ in a suitable manner.
A full definition is given at the beginning of Section 5 in \cite{ANPPSS}.

\begin{theorem}[Agarwal et al.]\label{thm:agarwal}
Let $P\subset\R^2$ be a set of points and let $\mathcal{C}$ be a set of
pseudo-parabolas that admit a three-parameter representation.
Assume that the incidence graph of $P\times \mathcal{C}$ contains no $K_{s,M}$, for some constant $M$.
Then
\[|I(P,\mathcal{C})| = O\left(|P|^{2/3}|\mathcal{C}|^{2/3} + |P|^{6/11}|\mathcal{C}|^{9/11}\log^{2/11}|\mathcal{C}| + |P|+|\mathcal{C}| \right). \]
\end{theorem}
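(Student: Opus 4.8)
The plan is to reprove the Szemer\'edi--Trotter bound for the arrangement of $\mathcal{C}$ by Sz\'ekely's crossing-number method, using the two hypotheses --- that the curves are pseudo-parabolas (so any two meet in at most two points, which in particular rules out $K_{3,2}$ in the incidence graph and bounds edge multiplicities) and that they admit a three-parameter representation --- to beat the exponent pair $(3/5,4/5)$ that the general Pach--Sharir bound (Theorem~\ref{thm:pachsharir} with $s=3$) would give. Write $m=|P|$ and $n=|\mathcal{C}|$. First I would discard every curve carrying at most one point of $P$, since these contribute only $O(n)$ incidences. On each surviving curve $c$ I order the points of $P\cap c$ along $c$ and join consecutive points by the sub-arc of $c$ between them. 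This yields a drawing $G$ in the plane with vertex set $P$, so that $|V(G)|=m$ and $|E(G)| = I(P,\mathcal{C}) - O(n)$.

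If in the drawing $G$ every two edges crossed at most once, then the total number of crossings would be $X = O(n^2)$, since two curves cross at most twice and cutting a curve into arcs creates no new crossings. The crossing lemma, in the form $X \ge c\,|E|^3/|V|^2$ valid whenever $|E|\ge 4|V|$, would then give $|E| = O\big((X|V|^2)^{1/3}\big) = O(m^{2/3}n^{2/3})$, and hence the first term $m^{2/3}n^{2/3}$ (with the additive $m$ covering the case $|E|<4|V|$). The obstruction is that two sub-arcs may cross \emph{twice}: such a pair bounds a \emph{lens}, and lenses are exactly what the three-parameter representation is needed to control.

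The main step, and the principal obstacle, is therefore a purely combinatorial bound on lenses: the number of pairwise non-overlapping lenses in the arrangement of $\mathcal{C}$ is near $n^{3/2}$, up to a logarithmic factor. I would establish this by invoking the three-parameter representation to identify each curve with a point in $\R^3$, so that the relation ``the arcs of $c$ and $c'$ between two consecutive intersection points form a lens'' becomes a semialgebraic predicate of bounded description complexity on the corresponding pairs of points. A Clarkson--Shor random-sampling argument applied to this bounded-complexity system then yields the desired $O(n^{3/2}\log n)$-type bound on non-overlapping lenses, the logarithm being the usual sampling overhead; this is where the hypotheses enter in an essential way, and where all the real difficulty lies. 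With this bound in hand one can cut the curves at lens endpoints so that the resulting collection of $O(n^{3/2}\log n)$ arcs pairwise cross at most once, reducing the configuration to the single-crossing case of the previous paragraph.

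Finally I would assemble the estimate not by a single application of the crossing lemma but by a Clarkson--Shor style recursive partition: sample a subset of the curves, build the arrangement, apply the lens bound and the single-crossing argument inside the cells of the induced partition, and recurse, balancing the number of cells against the per-cell cost. The two regimes of this recursion produce the two terms of the bound --- the crossing-lemma regime gives $m^{2/3}n^{2/3}$, while the lens-cutting overhead, fed through the recursion, gives $m^{6/11}n^{9/11}\log^{2/11}n$ --- with the additive $m+n$ absorbing the sparse and degenerate cases. The exponents $6/11$ and $9/11$ and the power $2/11$ on the logarithm are precisely what falls out of solving the resulting recurrence, and I expect the bookkeeping in this final balancing, together with the lens bound of the third paragraph, to be the delicate part of the argument.
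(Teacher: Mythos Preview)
The paper does not prove this statement at all: Theorem~\ref{thm:agarwal} is quoted as a known result of Agarwal, Nevo, Pach, Pinchasi, Sharir, and Smorodinsky~\cite{ANPPSS}, and is used as a black box in the proof of Theorem~\ref{thm:main3}. There is therefore no ``paper's own proof'' to compare your proposal against.

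That said, your sketch is a fair high-level reconstruction of the strategy in~\cite{ANPPSS}: bound the number of non-overlapping lenses among $n$ pseudo-parabolas with a three-parameter representation by roughly $n^{3/2}\log n$, cut the curves at lens endpoints into that many pairwise single-crossing arcs (pseudo-segments), and then feed the cutting bound into an incidence argument for pseudo-segments to obtain the two leading terms. Two places where your outline is vague relative to the actual proof are worth flagging. First, the lens bound is not obtained by a generic Clarkson--Shor sampling on a semialgebraic predicate as you suggest; in~\cite{ANPPSS} it is derived by a more structured argument exploiting the pseudo-parabola property and the three-parameter family, and this is genuinely the hard part. Second, the exponents $6/11$, $9/11$, $2/11$ do not fall out of a single crossing-lemma application after cutting all curves; they arise from balancing the cutting cost against the incidence count via a specific two-level decomposition (partitioning the points, bounding incidences in each cell), and getting the recurrence right is where the arithmetic happens. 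Your proposal gestures at this (``Clarkson--Shor style recursive partition'') but does not pin down the mechanism, so as written it would not yet constitute a proof.
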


We will also use the crossing lemma (see Sz\'ekely \cite{S}) and a theorem of Beck \cite[Theorem 3.1]{B} (sometimes referred to as ``Beck's two extremities theorem'').

\begin{theorem}\label{thm:crossing}
A graph $G$ with $n$ vertices, $e$
edges, and maximum edge multiplicity $s$, has at least $\upOmega(e^3/sn^2)$ edge crossings
in any plane drawing, unless $e< 4ns$.
\end{theorem}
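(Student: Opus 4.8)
The plan is to establish this in two stages: first the simple-graph version of the crossing lemma, then an amplification that accounts for the multiplicity $s$. For a \emph{simple} graph I would begin with the Euler-formula bound, namely that a simple planar graph on $n$ vertices has at most $3n-6$ edges. Consequently, in any drawing one can delete at most one edge per crossing to planarize it, which yields the linear bound $\mathrm{cr}(G)\ge e-3n$. I would then amplify this by the standard random-sampling trick: retain each vertex independently with probability $p$, apply the linear bound to the induced subgraph, and take expectations, using that a crossing survives precisely when its four endpoints are retained. This gives $p^4\,\mathrm{cr}(G)\ge p^2e-3pn$, and choosing $p=4n/e$ (legitimate once $e\ge 4n$) produces the familiar $\mathrm{cr}(G)=\Omega(e^3/n^2)$.

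For the multigraph statement, the naive route---decomposing $G$ into $s$ simple color classes, or inserting the multigraph linear bound $\mathrm{cr}(G)\ge e-3sn$ directly into the vertex-sampling argument---only yields $\Omega(e^3/(s^2n^2))$, which is a factor of $s$ too weak. (The extremal example, obtained by replacing each edge of an optimal simple graph by $s$ parallel copies, shows the truth is $\Omega(e^3/(sn^2))$.) To recover the correct dependence I would instead reduce to the simple case by random \emph{edge} sampling. Fix an optimal drawing $\mathcal{D}$ of $G$ with $X=\mathrm{cr}(G)$ crossings, and retain each edge independently with probability $q=\Theta(1/s)$. Since each bundle of parallel edges has size at most $s$, after discarding the few surviving duplicate edges the retained graph $H'$ is simple, yet in expectation still has $\Theta(qe)$ edges. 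Applying the simple-graph bound pointwise to $H'$ and using convexity (Jensen's inequality) to pass the expectation through the cube, I obtain $\mathbf{E}[\mathrm{cr}(H')]\ge c\,(\mathbf{E}[|E(H')|])^3/n^2-c'n=\Omega(q^3e^3/n^2)$.

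On the other hand, each crossing of $\mathcal{D}$ survives in the retained drawing exactly when both of its edges are retained, so $\mathbf{E}[\mathrm{cr}(H')]\le q^2X$. Comparing the two estimates gives $q^2X=\Omega(q^3e^3/n^2)$, that is $X=\Omega(qe^3/n^2)=\Omega(e^3/(sn^2))$, once the additive $O(n)$ term is absorbed into the main term. The hypothesis $e\ge 4sn$ is exactly what guarantees $\mathbf{E}[|E(H')|]=\Theta(e/s)=\Omega(n)$, so that the simple-graph bound is in its effective range and the error term is dominated.

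The step I expect to be the main obstacle is obtaining the sharp $1/s$ factor rather than $1/s^2$: the linear bound for multigraphs unavoidably carries a factor of $s$, so the amplification must be arranged to interact with the multiplicity only once. The edge-sampling reduction, combined with the convexity step, is precisely what isolates a single factor of $q\sim 1/s$. Verifying that the retained graph is genuinely simple after a negligible loss of edges, and that the additive error terms are dominated once $e\ge 4sn$, are the technical points that require the most care.
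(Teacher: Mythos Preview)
The paper does not prove this statement; it is quoted from Sz\'ekely's paper as a known tool, with no argument supplied. So there is nothing in the paper to compare your proof against.

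That said, your sketch is a correct proof, and it is one of the standard routes to the multigraph version of the crossing lemma. Your diagnosis that vertex sampling (or a decomposition into $s$ simple colour classes) only yields $\upOmega(e^3/(s^2n^2))$ is accurate, and the edge-sampling reduction with $q=\Theta(1/s)$ together with Jensen's inequality does recover the sharp $1/s$ dependence. The three estimates you need all check out: each parallel class of size $k\le s$ contributes $1-(1-q)^k\ge qk/2$ to $\mathbf{E}[|E(H')|]$ (using $qk\le 1$), so $\mathbf{E}[|E(H')|]=\Theta(qe)$; passing from $H$ to the simplified $H'$ can only discard crossings, so $\mathbf{E}[\mathrm{cr}(H')]\le q^2X$; and the additive error $O(n/q^2)=O(s^2n)$ is dominated by the main term $e^3/(sn^2)$ precisely when $e=\upOmega(sn)$, which the hypothesis $e\ge 4sn$ provides.
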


\begin{theorem}[Beck]\label{thm:beck}
There is a constant $c>0$ such that, for any set of $m$ points in $\R^2$, either at least $cm$
of the points lie on a line, or the points determine at least $cm^2$ distinct lines.
\end{theorem}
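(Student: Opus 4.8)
The plan is to prove the statement in contrapositive form: assuming that no line is incident to more than $cm$ of the points (so the first alternative fails), I would show that the points span $\upOmega(m^2)$ distinct lines, which is the second alternative. The theorem as stated, with a single constant $c$, then follows by taking $c$ to be the smaller of the ``richness'' threshold and the constant hidden in the final $\upOmega(m^2)$.

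The engine is the Szemer\'edi--Trotter bound \eqref{eq:st} (the line case of Theorem~\ref{thm:pachsharir}), used in its dual line-counting form. First I would record that, for every integer $j\ge 2$, the number $t_{\ge j}$ of lines incident to at least $j$ points of $P$ satisfies $t_{\ge j}=O(m^2/j^3+m/j)$. This follows by substituting the trivial lower bound $I(P,L_{\ge j})\ge j\,t_{\ge j}$ into \eqref{eq:st} and solving for $t_{\ge j}$; for the small values $j=2,3$ the estimate is anyway dominated by the trivial bound $t_{\ge j}\le\binom{m}{2}$, so it holds for all $j\ge 2$.

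The heart of the argument is a dyadic accounting of collinear pairs. Since each pair of points lies on a unique line, one has the exact identity $\sum_{\ell}\binom{k_\ell}{2}=\binom{m}{2}$, where $k_\ell$ denotes the number of points of $P$ on the line $\ell$. I would bound the number of collinear pairs supported on lines with at least $s$ points, for a large constant threshold $s$ to be fixed later. Grouping these lines by the dyadic scale of $k_\ell$ and using the line-counting bound together with the cutoff $k_\ell\le cm$ (the failure of the first alternative), the range $k_\ell\in[2^i,2^{i+1})$ contributes $O\!\left(m^2/2^{i}+m\,2^{i}\right)$ pairs. Summing over $s\lesssim 2^i\le cm$, the first term gives a geometric series dominated by its smallest scale $2^i\approx s$ and the second a geometric series dominated by its largest scale $2^i\approx cm$, so the total number of pairs on lines with $\ge s$ points is $O(m^2/s+cm^2)$. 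Choosing $s$ large enough and then $c$ small enough makes this at most $\tfrac12\binom{m}{2}$.

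Consequently at least half of the $\binom{m}{2}$ pairs lie on lines carrying fewer than $s$ points. Each such line accounts for fewer than $\binom{s}{2}=O(1)$ pairs, so the number of these sparse lines---and hence the total number of distinct lines determined by $P$---is at least $\tfrac12\binom{m}{2}\big/\binom{s}{2}=\upOmega(m^2)$, as required. I expect the only real difficulty to be the bookkeeping of constants: the richness constant $c$ and the threshold $s$ must be chosen consistently (with $s$ fixed first, depending only on the absolute constant in \eqref{eq:st}, and $c$ chosen afterwards), and the two geometric sums must be truncated at the correct endpoints so that each is dominated by the intended extreme term. An alternative would be to apply the crossing lemma (Theorem~\ref{thm:crossing}) directly to the graph obtained by joining consecutive collinear points along each line; but since that is precisely how \eqref{eq:st} itself is proved, routing the argument through \eqref{eq:st} and the dyadic decomposition seems the most economical.
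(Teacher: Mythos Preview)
Your argument is correct and is the standard derivation of Beck's theorem from the Szemer\'edi--Trotter bound: the line-count estimate $t_{\ge j}=O(m^2/j^3+m/j)$, the dyadic summation of collinear pairs, and the choice of a fixed threshold $s$ followed by a sufficiently small richness constant $c$ are all set up properly, and the bookkeeping you flag is indeed the only delicate point.

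However, there is nothing to compare against: the paper does not prove Theorem~\ref{thm:beck}. It quotes it as a known result of Beck~\cite{B} in the preliminaries and uses it as a black box in the proof of Theorem~\ref{thm:main2}. So your write-up supplies a proof where the paper intentionally omits one; the approach you take is essentially Beck's original, as it is usually presented via Szemer\'edi--Trotter.
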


In three dimensions, we use the following incidence bound due to Zahl \cite{Z}
(the same bound was independently proved in the case of unit spheres by Kaplan et al. \cite{KMSS}).
We state it in a slightly different form that is convenient for us, and that follows from the proof given in \cite{Z}.
We write $E(G)$ for the set of edges of a graph $G$.

\begin{theorem}[Zahl]\label{thm:zahl}
Let $P\subset\R^3$ be a set of $m$ points and let $\mathcal{S}$ be a set of $n$ algebraic
surfaces in $\R^3$ whose degree is bounded by a constant.
Let $I$ be a subgraph of the incidence graph of $P\times \mathcal{S}$ that contains neither $K_{M,3}$ nor $K_{3,M}$,
for some constant parameter $M$.  Then
\[|E(I)| = O\left(m^{3/4}n^{3/4}+m+n\right),\]
where the constant of proportionality depends on the maximum  degree of the surfaces and on $M$.
\end{theorem}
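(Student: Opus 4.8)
The plan is to prove this by the polynomial partitioning method of Guth and Katz, combined with induction on the dimension of the ambient variety. I would first record the weak bounds supplied by the two forbidden-subgraph conditions via Kővári–Sós–Turán: since $I$ contains no $K_{3,M}$ (no three points lying on $M$ common surfaces), we get $|E(I)| = O(mn^{2/3}+n)$, and since it contains no $K_{M,3}$ (no three surfaces meeting in $M$ common points), we get $|E(I)| = O(m^{2/3}n+m)$. These two bounds serve as base cases, account for the linear terms $m+n$, and already cover the regimes where $m$ and $n$ are very unbalanced; the work is to improve them to the balanced bound $m^{3/4}n^{3/4}$ in the intermediate range.

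First I would apply polynomial partitioning: for a parameter $r$ to be chosen, there is a polynomial $f$ of degree $D = O(r^{1/3})$ whose zero set $Z(f)$ splits $\R^3 \setminus Z(f)$ into $O(r)$ open cells, each meeting at most $m/r$ points of $P$. I would then split $|E(I)|$ into incidences involving points in the open cells and incidences involving the points $P_0 = P \cap Z(f)$ on the partition surface. For the cell part, a bounded-degree surface either lies in $Z(f)$ or meets it in a curve of degree $O(D)$, and in the latter case a Bézout/Milnor–Thom argument shows it enters only $O(D^2)=O(r^{2/3})$ cells. Applying the induction hypothesis inside each cell and summing (a routine Hölder computation, using $m_i \le m/r$ and $\sum_i n_i = O(nr^{2/3})$) returns the cell contribution to the desired form $O(m^{3/4}n^{3/4})$, together with lower-order terms controlled by the base cases through a suitable choice of $r$.

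The main obstacle is the analysis on the partition surface $Z(f)$, which does not yield to a clean recursion and is the technical heart of the argument. Here I would separate the surfaces into those \emph{contained} in $Z(f)$ and those meeting it properly. The contained surfaces number only $O(D)$, since each corresponds to an irreducible factor of $f$ of degree at least one; their incidences with $P_0$ must be bounded directly, using the $K_{3,M}$- and $K_{M,3}$-free hypotheses (few such surfaces pass through any triple of points, and each point lies on few of them). For surfaces meeting $Z(f)$ properly, the incidences reduce to incidences between the points $P_0$ and the curves $S \cap Z(f)$ lying on the two-dimensional variety $Z(f)$, a genuinely lower-dimensional incidence problem. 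The delicate point is to verify that the bounded-degree and two-sided forbidden-subgraph conditions are inherited by this restricted configuration—so that each pair of the restricted curves meets in $O(1)$ points and each triple of points lies on $O(1)$ curves—and then to apply a planar-type Szemerédi–Trotter/Pach–Sharir bound (or a further partitioning step on $Z(f)$) so that the resulting estimate balances correctly with the cell contribution.

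Finally I would assemble the recurrence from the cell contribution, the two contributions from $Z(f)$, and the Kővári–Sós–Turán base cases, and optimize over $r$ to obtain $|E(I)| = O(m^{3/4}n^{3/4}+m+n)$, with the implied constant depending on the degree bound on the surfaces and on $M$.
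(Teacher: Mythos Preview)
The paper does not prove this theorem at all; it is quoted as an external result of Zahl~\cite{Z} (with the remark that the stated form ``follows from the proof given in~\cite{Z}'') and is used as a black box in Sections~\ref{sec:circletangs} and~\ref{sec:highdim}. So there is no ``paper's own proof'' to compare against.

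That said, your sketch is a faithful outline of Zahl's polynomial-partitioning argument: Guth--Katz partitioning in $\R^3$ with a degree-$D$ polynomial, the K\H{o}v\'ari--S\'os--Tur\'an bounds from the two forbidden-subgraph hypotheses as base cases, a cell-by-cell count using that each bounded-degree surface meets $O(D^2)$ cells, and a separate treatment of points on $Z(f)$ split into surfaces contained in $Z(f)$ (at most $O(D)$ of them, one per irreducible factor) versus surfaces meeting $Z(f)$ in a curve. Two small caveats. First, your claim that contained surfaces ``number only $O(D)$'' assumes the surfaces in $\mathcal S$ are irreducible; in general one should decompose them into irreducible components first (harmless, since the degree is bounded). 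Second, the genuinely delicate step---bounding incidences between $P_0$ and the intersection curves on the two-dimensional variety $Z(f)$---is exactly where Zahl's paper does the real work, and it is not simply a Pach--Sharir application: one must first pass to a generic projection or otherwise control the geometry of $Z(f)$, and verify that the restricted curves still have bounded pairwise intersection. You flag this honestly as the ``main obstacle,'' which is accurate, but a complete proof would need to fill in precisely that part.
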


Although our main theorems are set in at most three dimensions, some of our proofs involve incidence problems in higher dimensions.
We will use the following incidence bound from Fox et al.~\cite{FPSSZ}.
Note that Theorem \ref{thm:zahl} is essentially the special case where $d = 3$ and $s = 3$, without the extra $\eps$ in the exponent.
The case $d=4$ was independently proved by Basu and Sombra \cite{BS}, without the $\eps$, but under slightly more restrictive conditions.

\begin{theorem}[Fox et al.]\label{thm:FPSSZ1}
Let $\mathcal{V}$ be a set of $n$ constant-degree varieties and let $\mathcal{P}$ be a set of $m$ points, both in $\R^d$, such that the incidence graph of $\mathcal{P}\times\mathcal{V}$ contains no $K_{s,t}$.
Then
\[I(\mathcal{P},\mathcal{V}) = O\left(m^{\frac{s(d-1)}{ds-1}+\eps}n^{\frac{d(s-1)}{ds-1}} +m+n\right),\]
for any $\eps>0$, where the constant of proportionality depends on $d,s,t, \eps$.
\end{theorem}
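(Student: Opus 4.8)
The plan is to prove this bound by the polynomial partitioning method of Guth--Katz, organized as a double induction: an outer induction on the dimension $d$, and, for each fixed $d$, an inner induction on the number $m$ of points. Write $\alpha=\frac{s(d-1)}{ds-1}$ and $\beta=\frac{d(s-1)}{ds-1}$ for the two exponents in the statement, and suppose inductively that $I(\mathcal{P}',\mathcal{V}')\le A\,|\mathcal{P}'|^{\alpha+\eps}|\mathcal{V}'|^{\beta}+A(|\mathcal{P}'|+|\mathcal{V}'|)$ holds either in dimension $d$ with fewer than $m$ points, or in dimension $d-1$ with any number of points, for a large constant $A=A(d,s,t,\eps)$. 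The base cases are immediate: for $d=1$ each constant-degree variety is a set of $O(1)$ points, so $I=O(n)$, which is consistent since there $\beta=1$; and for $m$ bounded by a constant the bound is trivial.

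For the inductive step I would first invoke the polynomial partitioning theorem: for a parameter $r$ to be fixed as a large constant, there is a polynomial $f$ of degree $O(r)$ whose zero set $Z(f)$ partitions $\R^d$ into $O(r^d)$ open cells, each meeting at most $m/r^d$ points of $\mathcal{P}$. Split the incidences into those with points lying in the open cells and those with points lying on $Z(f)$. For the cell part, the key geometric fact is that any variety of $\mathcal{V}$ of dimension at most $d-1$ that is not contained in $Z(f)$ meets only $O(r^{d-1})$ cells, a standard consequence of the Milnor--Thom bounds; hence the multiset of incident (variety, cell) pairs has total size $\sum_c n_c=O(n\,r^{d-1})$, where $n_c$ counts the varieties meeting cell $c$. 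Applying the inner induction hypothesis inside each cell (which has at most $m/r^d<m$ points, and whose incidence graph is still $K_{s,t}$-free) and summing via H\"older's inequality over the $O(r^d)$ cells with $\sum_c n_c=O(nr^{d-1})$, I obtain a cell contribution of the form $A\,m^{\alpha+\eps}n^{\beta}\,r^{\,d-\beta-d\alpha-d\eps}$.

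Here the crucial arithmetic is that $d\alpha+\beta=d$ exactly, so the exponent of $r$ collapses to $-d\eps$; this is precisely why the bound is \emph{critical}, and why the extra $\eps$ in the exponent is essentially unavoidable for this method. Since $-d\eps<0$, choosing the constant $r$ large enough that $r^{d\eps}$ dominates the hidden constants makes the cell contribution at most $\tfrac12 A\,m^{\alpha+\eps}n^{\beta}$, which is exactly what allows the inner induction to close; with $r$ constant, the leftover term $\sum_c n_c=O(nr^{d-1})=O(n)$ is absorbed into the $+n$.

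What I expect to be the main obstacle is the zero-set part: the incidences between varieties and the points lying on $Z(f)$. I would split the varieties according to whether they are contained in $Z(f)$. A variety not contained in $Z(f)$ meets it in a variety of dimension at most $d-2$, so those incidences reduce to an incidence problem inside the $(d-1)$-dimensional set $Z(f)$ and are controlled by the outer induction. The genuinely delicate case is that of varieties contained in $Z(f)$, since there may be many of them while the points on $Z(f)$ can comprise all of $\mathcal{P}$; these are handled by restricting the entire configuration to $Z(f)$ and applying the theorem one dimension lower, after stratifying $Z(f)$ into smooth pieces of dimension $d-1$ so that the outer induction hypothesis applies. Carrying out this stratification carefully, and checking that the resulting contribution fits within the remaining half of $A\,m^{\alpha+\eps}n^{\beta}$, is where the real technical work lies; the accumulation of these dimension-reduction steps across the induction is the second source of the $\eps$ loss.
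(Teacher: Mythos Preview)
The paper does not prove this statement; it is quoted from Fox, Pach, Sheffer, Suk, and Zahl \cite{FPSSZ} and used as a black box in Section~\ref{sec:highdim}. There is therefore nothing to compare your proposal against in this paper.

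That said, your outline is a faithful sketch of the polynomial partitioning argument that underlies the proof in \cite{FPSSZ}: the double induction on $d$ and $m$, the choice of a constant-degree partitioning polynomial, the Milnor--Thom bound on cell crossings, and the crucial identity $d\alpha+\beta=d$ that makes the exponent of $r$ equal to $-d\eps$ and forces the $\eps$-loss are all correct. One place where your sketch is a little glib is the zero-set step. For varieties contained in $Z(f)$ you cannot simply ``apply the theorem one dimension lower'' as stated, because the induction hypothesis in dimension $d-1$ is about points and varieties in $\R^{d-1}$, not on an arbitrary algebraic hypersurface in $\R^d$; one has to project generically, or work with a version of the statement phrased for points and semi-algebraic sets inside an ambient variety, and control how degrees behave under this reduction. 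This is handled in \cite{FPSSZ} (and is indeed where most of the technical effort goes), but it deserves more than a sentence if you are writing out the proof rather than citing it.
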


\subsection{Geometric facts}\label{sec:geometricfacts}

We need a number of basic facts about tangencies between various geometric objects.
For some of the more elementary facts we provide quick (and not too rigorous) proofs, while for the more difficult facts we only give references.

We define two algebraic varieties to be \emph{tangent} to each other at a point $p$ if both are nonsingular at $p$, and the tangent space at $p$ of one of the varieties is contained in the tangent space at $p$ of the other.
We will use this notion only for simple varieties, namely lines and circles in $\R^2$, and lines, planes, and quadrics in $\R^3$.

\begin{lemma}\label{lem:twocircles}
$(a)$ Any two circles in $\R^2$ have at most four common tangent lines.\\
$(b)$ Any three lines in $\R^2$ have at most four common tangent circles.
\end{lemma}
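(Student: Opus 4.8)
The plan is to prove each part by a dimension-counting / degrees-of-freedom argument, translating the geometric tangency conditions into a system of polynomial equations and bounding the number of solutions.

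\medskip

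\noindent\textbf{Part (a).}
I would parametrize a line in $\R^2$ by its unit normal direction and signed distance from the origin, say $\ell = \{(x,y) : x\cos\theta + y\sin\theta = d\}$, so a line is given by a pair $(\theta, d)$ (modulo the obvious $(\theta,d)\sim(\theta+\pi,-d)$ identification). A circle $c$ with center $(a,b)$ and radius $r>0$ is tangent to $\ell$ precisely when the distance from $(a,b)$ to $\ell$ equals $r$, i.e.
\[
a\cos\theta + b\sin\theta - d = \pm r .
\]
For two given circles $c_1 = ((a_1,b_1),r_1)$ and $c_2 = ((a_2,b_2),r_2)$, a common tangent line $(\theta,d)$ must satisfy both
\[
a_1\cos\theta + b_1\sin\theta - d = \pm r_1, \qquad a_2\cos\theta + b_2\sin\theta - d = \pm r_2 .
\]
Subtracting eliminates $d$ and yields a single equation
\[
(a_1-a_2)\cos\theta + (b_1-b_2)\sin\theta = \pm r_1 \mp r_2 ,
\]
which, for each of the four sign choices on the right, is an equation of the form $A\cos\theta + B\sin\theta = E$. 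Each such equation has at most two solutions for $\theta$ in a full period, and $d$ is then determined uniquely by either original equation. The key point is to check that the four sign combinations $(\pm r_1, \pm r_2)$ reduce to at most four distinct right-hand-side values and that one does not overcount; a clean way to phrase it is that each tangent line is determined by a choice of which side of $\ell$ each center lies on, giving at most four geometric families, each contributing at most one tangent after fixing the combinatorial type. The classical answer is exactly four (two external and two internal common tangents), so I expect the bound to be tight and the argument to be essentially this elimination.

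\medskip

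\noindent\textbf{Part (b).}
Here I would dualize: a circle with center $(a,b)$ and radius $r$ has three degrees of freedom, so I treat $(a,b,r)$ as a point in the parameter space $\R^3$ (with $r\ge 0$). Given a line $\ell_i$ with equation $x\cos\theta_i + y\sin\theta_i = d_i$, the tangency condition ``$c$ is tangent to $\ell_i$'' becomes
\[
a\cos\theta_i + b\sin\theta_i - d_i = \pm r .
\]
For fixed signs, this is a \emph{linear} equation in $(a,b,r)$, hence defines a plane in parameter space; allowing both signs, the tangency locus of each line is a pair of planes. A common tangent circle to three lines $\ell_1,\ell_2,\ell_3$ is thus a common point of three such pairs of planes, i.e. the intersection of three planes, one chosen from each pair. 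There are $2^3 = 8$ sign combinations, each giving (generically) a single intersection point in $\R^3$, which seems to suggest eight rather than four. The crux is to halve this: I would argue that replacing $r$ by $-r$ (equivalently flipping all three signs simultaneously) sends a solution to a physically identical circle, since only $|r|$ is geometric and $r\ge 0$ forces a consistent global sign convention. This identification pairs up the eight sign patterns into four, leaving at most four admissible common tangent circles (the four incircle/excircle type circles of the triangle formed by the three lines).

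\medskip

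\noindent\textbf{Main obstacle.}
The routine part is the elimination/linear-algebra that produces a bounded number of candidate solutions; the delicate part in both cases is the bookkeeping of signs, i.e.\ making sure the factor-of-two identification (between $r$ and $-r$, or between a line and its two shifted copies) is carried out correctly so that one lands on four rather than on eight. I would also want to dispose of the degenerate configurations cleanly: in (a), concentric circles or equal circles where a whole family of lines could be tangent, and in (b), parallel or concurrent lines where the three planes fail to meet in a single point. In the degenerate cases the intersection either drops to fewer points or the hypotheses rule the situation out, so the stated upper bound of four continues to hold; but I expect verifying that no degenerate case \emph{increases} the count past four to be the step requiring the most care.
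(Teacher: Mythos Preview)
Your algebraic approach is correct and complete in spirit, but it differs from the paper's argument. The paper dismisses part~(a) as elementary and, for part~(b), gives a purely geometric proof: if the three lines are concurrent there is no tangent circle, and otherwise the three lines cut the projective plane into exactly four triangular cells; since a circle tangent to a line cannot cross it, any common tangent circle lies in one cell, and each (projective) triangle has a unique inscribed circle. This dispatches the degenerate cases in one sentence and makes the ``four'' appear for free.

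Your route via the parameter space $(a,b,r)$ and the $2^3$ sign patterns, halved by the involution $r\leftrightarrow -r$, is equally valid and in fact matches the style the paper itself uses later (e.g., the lift of circles to points and cones in $\R^3$ in Section~\ref{sec:circletangs}). Two small points to tighten: in part~(a), your phrase ``four geometric families, each contributing at most one tangent'' is not the right bookkeeping---the correct split is two families (centers on the same side versus opposite sides of the line), each contributing at most two tangents; equivalently, the eight oriented solutions collapse to four via the identification $(\theta,d)\sim(\theta+\pi,-d)$ that you already set up. In part~(b), you should note that for three \emph{distinct} lines (pairwise non-parallel as directions) the $3\times 3$ coefficient matrix is always invertible for every sign pattern, so each pattern gives at most one solution; this is a short determinant check and also handles the concurrent case, where the homogeneous system then forces $r=0$. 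With those clarifications your argument goes through cleanly; the paper's version is shorter, but yours generalizes more readily (and indeed this is essentially how the paper treats the analogous three-dimensional statements).
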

\begin{proof}
Part $(a)$ is elementary.
For part $(b)$, observe that if the three lines are concurrent, then there is no tangent circle.
Otherwise, we can move to the projective plane and observe that the lines divide the plane into four triangular cells.
Since a circle that is tangent to a line cannot cross the line, a common tangent circle must lie inside such a cell. Since a triangle contains exactly one circle tangent to all its sides, there are at most four common tangent circles.
\end{proof}

The ``problem of Apollonius'' asks for a circle tangent to three given circles.
All solution sets to this problem have been classified; see for instance \cite{BFW}. The lemma below follows from this classification.

\begin{lemma}\label{lem:apollonius}
Given three circles in $\R^2$ that are not mutually tangent at a common point, there are at most eight other circles tangent to all three.
\end{lemma}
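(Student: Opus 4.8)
The plan is to reduce Lemma~\ref{lem:apollonius} to the classical classification of Apollonius circles, which enumerates the solution circles tangent to three given circles according to the combinatorial position (internal versus external tangency) and the relative configuration of the three circles. The statement asks for a bound of eight on the number of solution circles \emph{other} than the three given ones, under the hypothesis that the three circles are not mutually tangent at a single common point.

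First I would recall the generic Apollonius count. For three circles in general position, there are exactly eight tangent circles, obtained by choosing, for each of the three given circles, whether the sought circle is tangent internally or externally, giving $2^3=8$ sign combinations. So in the generic case the bound of eight is exactly the classical answer, and there is nothing extra to do. The content of the lemma is therefore entirely in the degenerate cases, where the number of solutions may be infinite (for instance a coaxial pencil), may collapse, or where some of the ``solutions'' coincide with one of the three given circles. I would organize the proof as a case analysis driven by the classification in the cited reference \cite{BFW}, checking that in each case the number of tangent circles distinct from the three given ones does not exceed eight.

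The key step is to isolate and handle the cases where the classical count is infinite or where the solutions degenerate, and to argue that the hypothesis (no common tangency point of all three) rules these out or keeps them within the bound. The main obstacle is exactly this bookkeeping: configurations such as three circles sharing a common tangent line, or three mutually tangent circles, or coaxial families, can produce a continuum of tangent circles, and one must verify that the specific excluded configuration (all three tangent at one common point) is precisely what is needed to avoid an infinite family while still allowing the stray degenerate solutions to be absorbed into the count of eight. In practice I expect the cleanest route is simply to invoke the full classification of \cite{BFW}: it lists every possible solution set, and one reads off that, after discarding the cases excluded by our hypothesis, every remaining solution set is finite of size at most eight (and the three given circles themselves, if tangent to each other, account for at most the ``degenerate'' solutions, leaving at most eight \emph{other} tangent circles). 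Thus the lemma follows directly, the only real work being to confirm that the excluded configuration is the unique source of an infinite solution family among triples relevant to us.

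Since the paper explicitly states that ``the lemma below follows from this classification,'' I would keep the argument short: cite \cite{BFW} for the classification, note the generic count of $2^3 = 8$, and observe that the excluded degenerate configuration is the only one producing infinitely many tangent circles, so that in all admissible cases at most eight other circles are tangent to all three.
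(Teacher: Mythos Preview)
Your proposal is correct and takes essentially the same approach as the paper: both simply defer to the Apollonius classification in \cite{BFW} and read off the bound of eight once the mutually-tangent-at-a-point configuration is excluded. Your write-up merely makes explicit the $2^3$ sign count and the role of the excluded configuration, which the paper leaves implicit in the phrase ``follows from this classification.''
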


We will need several three-dimensional versions of Lemma \ref{lem:twocircles}, both for spheres and planes and for spheres and lines.
The statements become more complicated, as we encounter configurations for which no analogous bounds hold.
Such configurations are related to those in Theorems \ref{thm:3Dplanes} and \ref{thm:3Dlines}.
See Section \ref{sec:intro} for the definitions of the configurations.

\begin{lemma}\label{lem:threespheres}
$(a)$ Three spheres in $\R^3$ have at most eight common tangent planes, unless the spheres have a common tangent cone or cylinder.\\
$(b)$ Given three planes and at least nine common tangent spheres,
there are three spheres out of the nine that together with the three planes form a  $3\times 3$ cylinder or cone configuration.
\end{lemma}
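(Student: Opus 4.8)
The plan is to prove both parts using the same algebraic framework, parametrizing the objects tangent to three spheres and counting solutions via Bézout-type bounds, while carefully isolating the degenerate configurations (cone/cylinder) as the cases where the solution set becomes positive-dimensional. For part $(a)$, I would parametrize a plane in $\R^3$ by its unit normal direction $\nu = (a,b,c)$ with $a^2+b^2+c^2=1$ and its signed distance $d$ from the origin, so a plane is the pair $(\nu,d)$. The condition that this plane be tangent to a sphere with center $q_i$ and radius $r_i$ is the single equation $|\langle \nu, q_i\rangle - d| = r_i$, i.e.\ $\langle \nu, q_i\rangle - d = \pm r_i$. Imposing tangency to all three spheres gives three such equations (with independent sign choices). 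The key step is to eliminate $d$: subtracting pairs of equations gives two linear equations in $\nu$ of the form $\langle \nu, q_i - q_j\rangle = \pm r_i \mp r_j$, and together with the sphere constraint $|\nu|^2=1$ this is the intersection of two planes (a line, generically) with a unit sphere in the $\nu$-space, yielding at most two solutions for $\nu$ per fixed sign pattern, and then $d$ is determined. With four sign patterns (up to the overall reflection symmetry $\nu \mapsto -\nu$, $d\mapsto -d$), this gives the bound of $8$.

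The crucial part is identifying exactly when this count fails: it fails precisely when the two linear equations in $\nu$ become dependent or inconsistent in a way that makes the solution set positive-dimensional. This happens when $q_1, q_2, q_3$ are collinear (so the vectors $q_i - q_j$ are parallel), which forces the centers onto a common line $\ell$; in that case a one-parameter family of tangent planes can exist, and one checks that such a family consists exactly of planes tangent to a common cone or cylinder with axis $\ell$ (the cylinder case arising when the radii are equal, the cone case when they vary linearly along $\ell$). So the plan is: show that as long as the centers are not collinear the elimination yields a zero-dimensional system with at most $8$ solutions, and show that the only way to get infinitely many (equivalently, more than $8$) common tangent planes is the collinear-centers configuration, which is exactly a common tangent cone or cylinder.

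For part $(b)$, I would run the dual computation: a sphere is parametrized by its center $x\in\R^3$ and radius $\rho \ge 0$, giving four parameters $(x,\rho)$, and tangency to a plane $\Pi_i$ (with unit normal $\nu_i$ and offset $d_i$) is again a single signed-linear equation $\langle \nu_i, x\rangle - d_i = \pm\rho$. Three planes give three such equations in the four unknowns $(x,\rho)$; for each fixed sign pattern this is a linear system, so its solution set is an affine subspace. Generically these three linear equations are independent, cutting out a line in $(x,\rho)$-space, which meets the constraint $\rho \ge 0$ in a ray or segment — hence a one-parameter family of tangent spheres can occur even in the non-degenerate case, but all spheres in one such family share a structure. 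The main step is then a counting/pigeonhole argument: given nine common tangent spheres, by the pigeonhole principle at least three of them (in fact $\lceil 9/4\rceil = 3$, since there are at most four sign patterns up to symmetry) realize the same sign pattern and hence lie on a single such one-parameter linear family; I would then show that three spheres lying on one of these linear families, together with the three planes, must form a $3\times 3$ cone or cylinder configuration, by verifying that the centers of such a family are collinear (lying on a common axis line $\ell$) and the radii vary affinely, which is precisely the defining property of the cone (nonzero radius-rate) or cylinder (constant radius) configurations.

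The main obstacle I anticipate is the geometric identification step rather than the algebra: showing that the degenerate positive-dimensional solution families are \emph{exactly} the cone and cylinder configurations, and not some other exotic family, and correctly handling the boundary between the cone and cylinder cases (constant versus linearly-varying radius along the axis) as well as the sign-pattern bookkeeping. The linear elimination and Bézout counting are routine, but pinning down that a one-parameter family of planes tangent to three fixed spheres forces a common tangent quadric of revolution — and conversely that such a quadric produces the family — requires a careful argument about envelopes of planes, which is where the real content of distinguishing the non-degenerate from the degenerate situation lies.
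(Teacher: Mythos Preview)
Your plan is correct and will work, but it takes a more algebraic route than the paper's proof, which is largely synthetic-geometric. For part $(a)$, the paper argues as follows: any two of the spheres have at most two common tangent cones (in projective space), and every common tangent plane of the two spheres is tangent to one of these cones; then a projective transformation sends the cone to a cylinder and the third sphere to a sphere, after which intersecting with a suitable plane reduces to the $2$D fact that two circles have at most four common tangent lines, yielding $2\times 4=8$. Your direct elimination---subtracting the signed tangency equations to get two linear constraints on the unit normal $\nu$, intersecting with $|\nu|^2=1$, and counting $4$ sign classes times $2$ intersection points---gives the same bound without reducing to the planar lemma, and it makes the degenerate case (collinear centers, hence parallel constraint vectors $q_i-q_j$) completely transparent.

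For part $(b)$ the two arguments are closer than they look. The paper, after a projective transformation bringing the planes to meet at a finite point, observes that the locus of points equidistant from all three planes consists of four lines through the common point, each the axis of a cone tangent to all three planes; every tangent sphere is inscribed in one of these four cones, so nine spheres force three on the same cone by pigeonhole. Your four ``sign-pattern'' affine lines in $(x,\rho)$-space are exactly the same four one-parameter families described in different coordinates---the projection of each such line to the $x$-coordinates is precisely one of the paper's four equidistant axes, and the affine variation of $\rho$ along it is what makes the envelope a cone (or a cylinder if $\rho$ is constant). So your pigeonhole step $\lceil 9/4\rceil=3$ matches the paper's exactly. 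The advantage of your formulation is that you avoid the projective transformation and the case analysis on how the three planes intersect; the paper's advantage is that the cone/cylinder identification is immediate from the geometric picture, whereas in your approach you correctly flag the envelope verification as the one step needing care.
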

\begin{proof}
In projective space, any two spheres have at most  two common tangent cones, and all common tangent planes of the two spheres are tangent to one of those cones.
We claim that one such cone and a third sphere have at most four common tangent planes, unless the sphere is tangent to the cone (which may be a cylinder in $\R^3$).
This claim implies part $(a)$.
To prove the claim, apply a projective transformation that maps the cone to a cylinder and the sphere to a sphere. 
Intersect both with the plane through the center of the sphere and orthogonal to the axis of the cylinder, so that we get two circles, unless the sphere is tangent to the cylinder.
A common tangent plane of the cylinder and the sphere corresponds to a common tangent line of the two circles.
By Lemma \ref{lem:twocircles}$(a)$, there are at most four such lines.
This proves the claim. 

Consider part $(b)$.
If the intersection of the three planes is a line, then there is no common tangent sphere.
Otherwise, in projective space, the three planes intersect in a single point, and after a projective transformation we can assume that this point lies in $\R^3$.
The planes divide $\R^3$ into eight unbounded cells, each with three faces.
The set of points equidistant from all three planes consists of four lines through the origin, and each of these lines is the axis of a cone that is tangent to all three planes.
Any common tangent sphere must lie inside one of the cells, and it must be tangent to one of the four cones.
Thus, if there are more than eight common tangent spheres, there are three that are tangent to the same cone.
Together with the three planes these three spheres form a $3\times 3$ cone configuration.
After inverting the projective transformation, this may be a cylinder configuration in the original space.
\end{proof}

Finally, we need an analogue of Lemma \ref{lem:twocircles} for spheres and lines,
which is considerably harder, and was proved by Borcea et al. \cite{BGLP06}.
Note that their Theorem 1 only states that four spheres with collinear centers are exceptional, but their Lemma 7 classifies the exceptional configurations as spheres with a common cone, cylinder, or one-sheeted hyperboloid.
Note that a cone can be seen as a degenerate one-sheeted hyperboloid, and a cylinder can be seen as a cone with apex at infinity.
Each of these surfaces contains infinitely many lines (they are ruled surfaces), and all these lines are tangent to any sphere that is tangent to the surface and has its center on the axis of the surface.

\begin{lemma}[Borcea et al.]\label{lem:fourspheres}
Four spheres in $\R^3$ have at most twelve common tangent lines, unless the spheres have collinear centers.
If the spheres have collinear centers, then any common tangent line lies on a cone, cylinder, or one-sheeted hyperboloid tangent to all the spheres.
\end{lemma}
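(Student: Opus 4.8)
The plan is to parametrize lines in $\R^3$ by their Pl\"ucker coordinates and to turn the four tangency conditions into quadratic equations, so that the common tangent lines become the real points of an intersection of hypersurfaces whose number can be bounded by a B\'ezout-type count. Write a line through a point $a$ with direction $v$ as the pair $(v,w)$ with $w=a\times v$; these satisfy the Pl\"ucker relation $v\cdot w=0$ and determine a point of the Pl\"ucker quadric $Q\subset\mathbb{P}^5$. Since the squared distance from a center $c$ to the line is $|w-c\times v|^2/|v|^2$, tangency to the sphere with center $c_i$ and radius $r_i$ is the quadric
\[
|w|^2-2\,(c_i\times v)\cdot w+|c_i|^2|v|^2-(c_i\cdot v)^2-r_i^2|v|^2=0,\qquad i=1,2,3,4,
\]
and the common tangent lines are exactly the real points lying on $Q$ and on all four of these quadrics.

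For the upper bound I would exploit the fact that the term $|w|^2$ is common to all four equations. Subtracting the first from each of the other three gives three equations that are \emph{linear} in the moment coordinates $w$, of the shape $\big((c_i-c_1)\times v\big)\cdot w=g_i(v)$ with $g_i$ quadratic in $v$. Because the Pl\"ucker relation forces $w\in v^{\perp}$, and each coefficient vector $(c_i-c_1)\times v$ also lies in $v^{\perp}$, these are three linear conditions on $w$ inside the two-dimensional space $v^{\perp}$. When the centers are not collinear the coefficient vectors generically span $v^{\perp}$, so (any two of) the equations determine $w$ uniquely as a rational function of $v$, while the third imposes a compatibility condition defining a curve in the $v$-plane $\mathbb{P}^2$. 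Substituting $w(v)$ into the remaining tangency quadric and intersecting with this curve yields, by a resultant (equivalently, a degree computation on the Grassmannian), at most $12$ solutions over $\mathbb{C}$, hence at most twelve real common tangent lines. That the generic complex count is exactly $12$ is the classical ``twelve tangent lines to four spheres'' computation (Macdonald--Pach--Theobald, Sottile--Theobald), so $12$ is the correct constant.

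The two cases are separated by a single criterion: the coefficient vectors $(c_i-c_1)\times v$ all become parallel (namely, parallel to $u\times v$) precisely when the differences $c_i-c_1$ are all parallel, i.e. exactly when the four centers are collinear. As long as the centers are not collinear, the argument above makes the intersection zero-dimensional and the count applies. When the centers lie on a line, the three difference equations collapse to at most a single condition on $w$ within $v^{\perp}$, leaving a one-parameter family of moments for each admissible direction, and the intersection of $Q$ with the four tangency quadrics can become positive-dimensional. This is the only mechanism that can produce infinitely many common tangents.

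It remains to classify the degenerate case, and this is where I expect the main obstacle to lie. When the centers lie on a line $\ell$, the entire configuration is invariant under rotation about $\ell$, so any one-parameter family of common tangents is swept into a surface of revolution about $\ell$ that is tangent to all four spheres; being ruled and quadratic, such a surface must be a cone, a cylinder, or a one-sheeted hyperboloid, and then every ruling is a common tangent. Turning this heuristic into a proof requires a careful analysis of the non-transverse, positive-dimensional intersection of $Q$ with the four tangency quadrics: one must show that each positive-dimensional component is a pencil of lines lying on one such quadric of revolution, and that no other degenerate components arise. This is exactly the content of the classification in \cite[Lemma 7]{BGLP06}, and it is the genuinely hard step; by contrast, the bound of twelve in the non-degenerate case is a routine (if lengthy) elimination once the linearity in $w$ has been isolated.
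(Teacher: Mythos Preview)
The paper does not prove this lemma at all; it is quoted as a black box from Borcea et al.~\cite{BGLP06}, with the paper only pointing to Theorem~1 and Lemma~7 of that reference for the two halves of the statement. So there is no in-paper argument to compare against.

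Your Pl\"ucker-coordinate outline is the standard route and is essentially the one taken in \cite{BGLP06} (and, for the count of twelve, in the Macdonald--Pach--Theobald and Sottile--Theobald papers you mention). You isolate the right simplification---subtracting tangency equations so that they become linear in the moment $w$---and you correctly flag the collinear-centers classification as the hard step, deferring it to \cite[Lemma~7]{BGLP06}. One small slip: the sentence ``the coefficient vectors $(c_i-c_1)\times v$ all become parallel \ldots\ precisely when the differences $c_i-c_1$ are all parallel'' is only true if you quantify over \emph{all} $v$. For isolated directions $v$ (for instance $v$ in the affine hull of the centers) the vectors $(c_i-c_1)\times v$ can be dependent even when the centers are not collinear; to conclude zero-dimensionality you still need to argue that such directions contribute only finitely many lines. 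This is covered in the references you cite, so the sketch is honest about where the genuine work sits, but as written it is a proof plan rather than a proof.
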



\section{Distinct distances between points and lines}
\label{sec:pointline}

In this section we provide a lower bound on the minimum number $D(m,n)$ of distinct point-line
distances determined by $m$ points and $n$ lines in the plane.
As warmup, we first give a weaker bound in Subsection \ref{subsec:firstbound}, using a proof that serves as a model for most of the later proofs.
In Subsection \ref{subsec:linecirctangs}, we prove an upper bound on the number of tangencies
between a set of lines and a set of circles,
and as a corollary we obtain a slightly better
lower bound on $D(m,n)$.
For both of these bounds, we translate the problem to one about tangencies between circles and lines,
and then we dualize the problem to an incidence problem between points and algebraic curves, to which we can apply a known bound.
In Subsection \ref{subsec:mainbound}, we use the special structure of the problem to derive a better bound on $D(m,n)$, which is the bound in Theorem \ref{thm:main1}.


\subsection{An initial bound}\label{subsec:firstbound}

Let $P$ be a set of $m$ points and $L$ a set of $n$ lines.
Let $t$ be the total number of distinct distances between $P$ and $L$.
Around each of the $m$ points,
draw at most $t$ circles whose radii are the distances occurring from that point, and let $C$ be the set of these circles.
Recall that we write $T(L,C)$ for the number of tangencies, i.e., pairs $(l,c)\in L\times C$ such that the line $\ell$ is tangent to the circle $c$.
Note that $T(L,C) = mn$ since every point-line pair gives rise to one tangency between a line of $L$ and a circle of $C$.

We now dualize.
Concretely, we rotate the plane so that none of the lines in $L$ is vertical, and then we map a line $y=ax+b$ to the dual point $(a,b)$.
Under this map, an algebraic curve is mapped to the set of points that are dual to the non-vertical tangent lines of the curve; these dual points form an algebraic curve, called the \emph{dual curve}.
We refer to the original $xy$-plane as the \emph{primal plane}, and to the $ab$-plane as the \emph{dual plane}.

Applying this to our setting, the set $L$ of $n$ lines in the primal plane is mapped to a set $L^*$ of $n$ points in the dual plane,
and the set $C$ of (at most) $mt$ circles in the primal plane is mapped to a set $C^*$ of (at most) $mt$ algebraic curves in the dual plane.

We observe that by Lemma \ref{lem:twocircles}(b) the incidence graph of $L^*\times C^*$ contains no $K_{3,5}$.
Given this property, we can apply Theorem \ref{thm:pachsharir} with $s =3$ to get
\[I(L^*, C^*) = O(n^{3/5}(mt)^{4/5}+n+mt). \]
On the other hand, we have $I(L^*, C^*)=T(L,C) = mn$. Comparing these bounds gives either $mn = O(m^{4/5}n^{3/5}t^{4/5})$, so $t = \upOmega(m^{1/4}n^{1/2})$;
or $mn =O(n)$, so $m=O(1)$;
or $mn = O(mt)$, so $t = \upOmega(n)$.
Thus
\[D(m,n) = \upOmega\left(\min\left\{m^{1/4}n^{1/2},n \right\}\right),\]
unless $m=O(1)$.
We do not even state this bound as a theorem, because it will be superseded by the bounds that follow.
But we note that our three-dimensional proofs in Section \ref{sec:highdim} are modelled after this argument.


\subsection{A bound on line-circle tangencies}\label{subsec:linecirctangs}

We now give the proof of Theorem \ref{thm:main3}, which we restate here for the convenience of the reader.


\begingroup
\def\thetheorem{\ref{thm:main3}}
\begin{theorem}
Let $L$ be a set of $n$ lines and $C$ a set of $k$ circles in the plane. Then
\[
T(L,C)=
O\left(n^{2/3}k^{2/3} + n^{6/11}k^{9/11}\log^{2/11} k + k + n \right).
\]
\end{theorem}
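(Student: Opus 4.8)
The plan is to mimic the dualization strategy already used in the initial bound of Subsection 3.1, but now feeding the dual incidence problem into the stronger Agarwal et al. bound (Theorem 2.5) rather than the Pach-Sharir bound. First I would dualize using the same map: rotate the plane so that no line of $L$ is vertical, and send each line $y=ax+b$ to the dual point $(a,b)$, so that $L$ becomes a set $L^*$ of $n$ points in the dual $ab$-plane. Each circle $c\in C$ is sent to its dual curve $c^*$, the locus of dual points of the non-vertical tangent lines of $c$; this gives a set $C^*$ of $k$ algebraic curves of bounded degree. By construction, a line $\ell$ is tangent to a circle $c$ in the primal plane precisely when the dual point $\ell^*$ lies on the dual curve $c^*$, so $T(L,C)=I(L^*,C^*)$.

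The key step is to verify that $C^*$ is a family of \emph{pseudo-parabolas admitting a three-parameter representation}, so that Theorem 2.5 applies. A circle has three degrees of freedom (center and radius), which should give the required three-parameter representation after checking that the dual map is algebraic in these parameters. For the pseudo-parabola property I need two things: that each $c^*$ is the graph of an everywhere-defined continuous univariate function, and that any two of the dual curves meet in at most two points. The second point is the crucial one and follows directly from Lemma 2.8$(a)$: two circles have at most four common tangent lines, so two dual curves $c_1^*,c_2^*$ have at most four intersection points. This bound of four is too weak as stated — a genuine pseudo-parabola family needs at most two intersections per pair — so the main obstacle will be arranging this. I expect to resolve it by splitting each dual curve into a bounded number (at most two) of monotone branches, each an everywhere-defined graph, so that any two branches from different circles cross at most twice; partitioning $C^*$ into a constant number of sub-families of genuine pseudo-parabolas and summing the resulting bounds only affects the constant of proportionality. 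I would also need to confirm that the no-$K_{s,M}$ hypothesis holds: taking $s=3$, the absence of $K_{3,5}$ in the incidence graph of $L^*\times C^*$ again follows from Lemma 2.8$(a)$, since five circles sharing three common tangent lines would violate the four-common-tangent bound.

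With these verifications in place, Theorem 2.5 applied to the $n$ points $L^*$ and the $k$ pseudo-parabolas $C^*$ yields
\[
I(L^*,C^*) = O\!\left(n^{2/3}k^{2/3} + n^{6/11}k^{9/11}\log^{2/11}k + n + k\right),
\]
and since $I(L^*,C^*)=T(L,C)$ this is exactly the claimed bound. The main obstacle, as noted, is the geometric bookkeeping needed to turn the dual circle-curves into a bona fide pseudo-parabola family with the three-parameter representation and the at-most-two-intersection property; once that is secured, the incidence bound does the rest. A secondary point requiring care is the handling of vertical tangent lines lost in the dualization, but these contribute at most $O(k)$ tangencies in total (each circle has at most two vertical tangents) and are absorbed into the $k$ term.
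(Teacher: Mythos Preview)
Your proposal is essentially the paper's own argument: dualize lines to points and circles to their tangent-line curves, split each dual curve into two branches to obtain a family of pseudo-parabolas with a three-parameter representation, and apply Theorem~2.5. The paper makes the splitting concrete by computing that each dual curve is a hyperbola and interpreting its two branches as the loci of tangents from above and from below the circle, which immediately gives the at-most-two-intersection property you anticipated. One small slip: your justification of the no-$K_{3,5}$ condition invokes Lemma~2.8$(a)$, but five circles each tangent to three common lines does not contradict the four-common-tangent bound for a \emph{pair} of circles; the correct reference is Lemma~2.8$(b)$ (three lines have at most four common tangent circles), and in any case the three-parameter representation already handles this in the application of Theorem~2.5.
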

\addtocounter{theorem}{-1}
\endgroup
\begin{proof}
We dualize as in Subsection \ref{subsec:firstbound}.
Thus the set $L$ of $n$ lines in the primal plane is mapped to a set $L^*$ of $n$ points in the dual plane,
and the set $C$ of $k$ circles in the primal plane is mapped to a set of $k$ algebraic curves in the dual plane.
We claim that these curves are hyperbolas.

Indeed, the dual curve $c^*$ of a circle $c$ is the locus of all points $(a,b)$ dual to lines that are tangent to $c$.
If $c$ is centered at a point $p=(p_1,p_2)$ and has radius $r$, then the equation in $a,b$ that defines $c^*$ is (using the standard formula for the distance between a point and a line)
$$
\frac{|p_2-p_1a-b|}{\sqrt{1+a^2}} = r ,\quad\quad\text{or}\quad\quad
(p_2-p_1a-b)^2 - r^2(1+a^2) = 0 ,
$$
which is the equation of a hyperbola.
We only need this equation to establish that the dual curve is a hyperbola, and we do not use it any further; instead, we derive properties of these hyperbolas from the corresponding setting in the primal plane.

We treat each branch of $c^*$ as a separate curve, and obtain a
collection $C^*$ of $2k$ such curves.
Standard considerations show that one of the two branches of $c^*$ is the locus of points
dual to the lines tangent to $c$ from above, and the other branch
is the locus of points dual to the lines tangent to $c$ from below.

Each tangency between a line $\ell$ and a circle $c$ corresponds to an incidence between the dual hyperbola $c^*$ and the point $\ell^*$.
It is easily checked that the curves in $C^*$ are \emph{pseudo-parabolas}, since any two hyperbola branches in $C^*$ intersect
at most twice (indeed, any pair of circles have at most two common tangents, if for each circle we consider only the tangents on one side, i.e., above or below). 
Moreover, the
curves in $C^*$ admit a three-parameter representation (see Subsection \ref{subsec:incs}), because the
circles in $C$ have such a representation. Then the desired bound follows from Theorem \ref{thm:agarwal}.
\end{proof}


We now deduce a lower bound on distinct point-line distances from the line-circle tangency bound above.
It is better than that in Subsection \ref{subsec:firstbound}, but in Subsection \ref{subsec:mainbound}, we will significantly improve it.

\begin{corollary} \label{cor:lensbound}
The minimum number $D(m,n)$ of point-line distances between $m$ points and $n$ lines in $\R^2$ satisfies
$$
D(m,n) = \upOmega\left(m^{2/9}n^{5/9}\log^{-2/9} m\right),
$$
provided that $m^{1/2}/\log^{1/2}m \le n \le m^5\log^4m$, and that $m$ is larger than some sufficiently large constant.
\end{corollary}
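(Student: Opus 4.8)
The plan is to run exactly the same dualization argument as in Subsection \ref{subsec:firstbound}, but to feed in the sharper tangency bound of Theorem \ref{thm:main3} in place of the Pach--Sharir bound. As before, let $P$ be a set of $m$ points and $L$ a set of $n$ lines, let $t = D(m,n)$ be the number of distinct point-line distances, and around each point of $P$ draw the at most $t$ circles whose radii are the distances realized from that point. This produces a family $C$ of at most $k = mt$ circles, and every point-line pair contributes one line-circle tangency, so $T(L,C) = mn$.

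Next I would simply apply Theorem \ref{thm:main3} to the pair $(L,C)$ with $k = mt$. This gives
\[
mn = T(L,C) = O\left(n^{2/3}(mt)^{2/3} + n^{6/11}(mt)^{9/11}\log^{2/11}(mt) + mt + n\right).
\]
Now I would balance $mn$ against each of the four terms in turn. The first term yields $t = \upOmega(m^{1/2}n^{1/2})$, the additive terms $mt$ and $n$ yield $t = \upOmega(n)$ and $m = O(1)$ respectively, and the crucial middle term, after dividing by $m$ and solving for $t$, yields $t = \upOmega\!\left(m^{2/9}n^{5/9}\log^{-2/9}(mt)\right)$. Since $t \le n$ and $k = mt \le mn \le m^{O(1)}$ in the stated range, one has $\log(mt) = \Theta(\log m)$, which lets me replace the $\log^{-2/9}(mt)$ factor by $\log^{-2/9} m$ and produces the claimed bound. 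Comparing the four candidate lower bounds, the dominant (smallest) one in the relevant range of $n$ is the middle-term bound $m^{2/9}n^{5/9}\log^{-2/9}m$, giving the stated $D(m,n) = \upOmega(m^{2/9}n^{5/9}\log^{-2/9}m)$.

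The only genuinely delicate point, and the one I would treat most carefully, is determining the \emph{range of validity}: the corollary restricts to $m^{1/2}/\log^{1/2}m \le n \le m^5\log^4 m$, and this range is precisely dictated by the requirement that the middle term of Theorem \ref{thm:main3} dominate the other three. Concretely, for the middle term to dominate the first term one needs $n^{6/11}k^{9/11}\log^{2/11}k \gtrsim n^{2/3}k^{2/3}$, i.e. $k \gtrsim n/\log^{2}k$, and for it to dominate the additive terms $k$ and $n$ one needs complementary upper bounds on $n$; substituting $k = mt$ with $t$ of the expected order $m^{2/9}n^{5/9}$ and simplifying turns these into the two stated constraints on $n$ relative to $m$. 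I expect the bookkeeping here — keeping track of the logarithmic factors while substituting the self-referential bound $k = mt$ back into the $\log$ terms — to be the main obstacle, though it is entirely routine once one commits to the estimate $\log(mt) = \Theta(\log m)$ throughout. The condition that $m$ exceed a sufficiently large constant is needed exactly so that these logarithmic comparisons are valid and the additive $+\,m$, $+\,n$ terms do not interfere.
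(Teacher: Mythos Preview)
Your proposal is correct and follows essentially the same route as the paper's own proof: draw the $\le mt$ circles, observe $T(L,C)=mn$, plug into Theorem~\ref{thm:main3}, and balance $mn$ against each of the four terms to extract $t=\upOmega(m^{2/9}n^{5/9}\log^{-2/9}m)$ in the stated range. The only cosmetic difference is that the paper phrases the outcome as a three-way minimum and then checks which term wins, whereas you discuss the range condition as the regime where the middle term dominates; the content is the same.
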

\begin{proof}
Let $P$ be a set of $m$ points and let $L$ be a set of $n$ lines in the plane.
Let $t$ be the number of distinct distances between points of $P$ and lines of $L$.
For each point $p \in P$, draw at most $t$
circles centered at $p$ with radii equal to the at most $t$ distances from
$p$ to the lines in $L$.
We obtain a family $C$ of at most $mt$ distinct
circles.

We double-count $T(L,C)$.
On the one hand, we trivially have $T(L,C) = mn$,
since for each of the $mn$ point-line pairs $(p,\ell) \in P \times L$, there
is exactly one tangency between the line $\ell$ and the circle centered at $p$ whose radius is the distance from $p$ to $\ell$.
On the other hand, we can apply Theorem \ref{thm:main3} with $|C| \leq mt$ and $|L| = n$ to obtain
$$
mn = T(L,C) = O\left( n^{2/3}(mt)^{2/3} + n^{6/11}(mt)^{9/11}\log^{2/11}(mt) + mt + n \right) .
$$
Eliminating $t$ from this inequality yields 
\[
t = \upOmega\left(\min\left\{m^{1/2}n^{1/2},\; n^{5/9}m^{2/9}\log^{-2/9} m,\; n \right\} \right),
\]
assuming that $m$ is at least some sufficiently large constant.
The minimum is attained by the second term, unless either $n > m^5\log^4m$ or $n < m^{1/2}/\log^{1/2}m$.
\end{proof}


\subsection{Proof of Theorem \ref{thm:main1}}\label{subsec:mainbound}

In this section we prove Theorem \ref{thm:main1}, which we restate below.
We start, as in the proof of Corollary \ref{cor:lensbound}, by reducing the problem to counting line-circle tangencies, and then we dualize.
Instead of directly using an incidence bound for the dual curves, we derive a better bound by taking a closer look at the structure of the problem.
In particular, we make use of the fact that our family of circles consists of relatively few families of many  concentric circles.
Our approach is similar to that used by Sz\'ekely \cite{S} to prove the bound $\upOmega(m^{4/5})$ on the number of distinct point-point distances determined by $m$ points,
but our proof is more involved because we have to go back and forth between the primal and dual plane.

\begingroup
\def\thetheorem{\ref{thm:main1}}
\begin{theorem}
For $m^{1/2}\le n\le m^2$, the minimum number $D(m,n)$ of point-line distances between $m$ points and $n$ lines in $\R^2$ satisfies
\[D(m,n) = \upOmega\left(m^{1/5}n^{3/5}\right).\]
\end{theorem}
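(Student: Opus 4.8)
The plan is to reuse the reduction-and-dualization scheme of Subsection~\ref{subsec:firstbound}, but to replace the black-box incidence bound by a crossing-lemma argument in the spirit of Sz\'ekely~\cite{S}, tailored to the fact that our circles come in $m$ \emph{concentric} families.

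First I would set $t=\max_{p\in P}t_p$, where $t_p$ is the number of distinct distances from $p$ to the lines of $L$; a lower bound on $t$ of the stated form immediately yields the (stronger) per-point statement. As in Subsection~\ref{subsec:firstbound}, I draw around each $p$ the $t_p\le t$ circles whose radii are the distances from $p$ to $L$, obtaining a family $C$ of at most $mt$ circles split into $m$ concentric families $C_p$, with $T(L,C)=mn$. Dualizing turns $L$ into a set $L^\ast$ of $n$ points and $C$ into a family $C^\ast$ of at most $2mt$ hyperbola branches grouped into $m$ families $\mathcal{F}_p$, and the tangency count becomes the incidence count $I(L^\ast,C^\ast)=mn$. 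Two observations drive the argument: every point of $L^\ast$ lies on exactly one branch of each family $\mathcal{F}_p$ (each line is tangent to a unique circle of $C_p$), so each family covers all $n$ points; and two concentric circles have no common tangent, so the branches inside one family are pairwise disjoint, while two branches from different families meet at most twice by Lemma~\ref{lem:twocircles}$(a)$.

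The core step is a Sz\'ekely-type drawing. I build a multigraph $G$ on the vertex set $L^\ast$ by joining, along each branch of $C^\ast$, the pairs of points of $L^\ast$ consecutive on that branch. Discarding at most $|C^\ast|\le 2mt$ arcs, the edge count is $e\ge mn-2mt$; since the case $t\ge n/8$ already gives $t=\upOmega(n)=\upOmega(m^{1/5}n^{3/5})$ for $n\ge m^{1/2}$, I may assume $t<n/8$, whence $e=\upOmega(mn)$. Because branches of the same family never cross, every crossing of $G$ is a crossing of two branches from different families, and each such pair crosses at most twice, so $G$ has $O(m^2t^2)$ crossings. Feeding $e=\upOmega(mn)$, the $n$ vertices, and the maximum edge multiplicity $s$ into the crossing lemma (Theorem~\ref{thm:crossing}) gives $m^2t^2=\upOmega\!\left(e^3/(sn^2)\right)=\upOmega\!\left(m^3n/s\right)$, that is, $t=\upOmega\!\left(\sqrt{mn/s}\right)$. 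This reproduces Theorem~\ref{thm:main1} precisely when the effective multiplicity is $s=O\!\left(m^{3/5}n^{-1/5}\right)$.

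The hard part is exactly this control of the multiplicity $s$. An edge $\{\ell_1^\ast,\ell_2^\ast\}$ of multiplicity $\mu$ arises from $\mu$ circles of $C$ tangent to both $\ell_1$ and $\ell_2$, i.e.\ from $\mu$ points of $P$ lying on a common angle bisector of $\ell_1$ and $\ell_2$; equivalently, $\ell_2$ is the reflection of $\ell_1$ across a line $\lambda$ that is $\mu$-rich in $P$. In the worst case, e.g.\ when all of $P$ is collinear, a single pair can have multiplicity as large as $m$, so a uniform bound on $s$ only recovers the weaker estimate $t=\upOmega(m^{1/4}n^{1/2})$ of Subsection~\ref{subsec:firstbound}. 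To do better I would split the edges of $G$ dyadically by multiplicity: for each scale $\mu$, the edges of multiplicity at least $\mu$ come from $\mu$-rich bisector lines, whose number is bounded by the Szemer\'edi--Trotter estimate~\eqref{eq:st}, and each rich line $\lambda$ contributes only the $O(n)$ reflection-pairs of lines symmetric across it. Bounding the total multiplicity carried by the high-scale edges, and treating separately the genuinely degenerate case in which many points of $P$ are collinear via Theorem~\ref{thm:beck}, should show that the bulk of the $\upOmega(mn)$ edges survive with multiplicity below $m^{3/5}n^{-1/5}$, at which point the crossing-lemma bound yields $t=\upOmega(m^{1/5}n^{3/5})$. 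Getting the thresholds in this dyadic balancing to cooperate --- shuttling each time between the primal bisector picture and the dual crossing picture --- is the delicate and genuinely new ingredient compared with Sz\'ekely's original proof.
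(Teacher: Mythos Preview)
Your overall architecture coincides with the paper's: dualize, build Sz\'ekely's multigraph $G$ on $L^\ast$ along the hyperbola branches, bound crossings by $O(m^2t^2)$, and control the edge multiplicity via Szemer\'edi--Trotter applied to the angular bisectors in the primal plane. Where your sketch diverges from the paper, and where the real gap lies, is in the accounting for the high-multiplicity edges.

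You propose to bound, at each dyadic scale $\mu$, the number of \emph{distinct} edges of multiplicity $\ge\mu$ by $(\text{number of }\mu\text{-rich lines})\times O(n)$, and then sum. Carrying this out gives
\[
|E_2|\;\le\;\sum_{\mu>s}\bigl|\{e:\ \mathrm{mult}(e)\ge\mu\}\bigr|
\;=\;O\!\Bigl(n\sum_{\mu>s}\bigl(\tfrac{m^2}{\mu^3}+\tfrac{m}{\mu}\bigr)\Bigr)
\;=\;O\!\Bigl(\tfrac{nm^2}{s^2}+nm\log\tfrac{m}{s}\Bigr),
\]
and the $nm\log(m/s)$ term already swamps the total edge count $mn$ unless $s=\Omega(m)$; with $s\asymp m$ the crossing-lemma step only yields $t=\Omega(n^{1/2})$, far short of $m^{1/5}n^{3/5}$. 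The per-rich-line factor $O(n)$ is simply too crude. The appeal to Beck's theorem does not rescue this: the paper never special-cases collinear $P$, and in fact the collinear case is not the obstruction.

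The paper's fix is to change the unit of charging. Instead of counting distinct edges per rich line, it charges each \emph{edge-copy} in $E_2$ (a circle $c$ centered at $p$, with $\ell_1^\ast,\ell_2^\ast$ consecutive on $c^\ast$) to the incidence $(p,\lambda)$, where $\lambda$ is the relevant bisector of $\ell_1,\ell_2$. The crucial observation you are missing is that this incidence is charged at most $2t$ times: there are at most $t$ circles centred at $p$, and on each circle at most two pairs of \emph{consecutive} tangent lines have the fixed line $\lambda$ as bisector (the consecutivity is essential here). Hence
\[
|E_2|\;\le\;2t\cdot I\bigl(P,\{\text{lines with }\ge s/2\text{ points of }P\}\bigr)\;=\;O\!\Bigl(\tfrac{m^2t}{s^2}+mt\Bigr),
\]
which, balanced against $|E_1|=O(m^{2/3}n^{2/3}t^{2/3}s^{1/3}+ns)$ at $s\asymp m^{4/7}t^{1/7}n^{-2/7}$, gives $mn=O(m^{6/7}n^{4/7}t^{5/7})$ and hence $t=\Omega(m^{1/5}n^{3/5})$. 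No dyadic decomposition or Beck-type case split is needed.
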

\addtocounter{theorem}{-1}
\endgroup
\begin{proof}
Let $P$ be a set of $m$ points and let $L$ be a set of $n$ lines in the plane.
Again, let $t$ denote the number of distinct point-line distances, draw at most $t$ circles around every point
of $P$, and denote the resulting set of circles by $C$.  As before, we have $T(L,C)= mn$.

In the dual plane we have a set $L^*$ of $n$ points.
Recall that the dual curve $c^*$ of a circle $c$ is a hyperbola.
As before, we treat each branch of the hyperbola as a separate curve, and we let $C^*$ be the set of these $2mt$ hyperbola branches.

To bound the number $I(L^*,C^*)$ of incidences between $L^*$ and $C^*$, we draw a topological (multi-)graph $G=(L^*,E)$ in the dual plane.
We assume without loss of generality that each hyperbola branch in $C^*$ contains at least two points of $L^*$.
Indeed, we can discard all curves of $C^*$ containing at most one point of $L^*$, thereby discarding at most $2mt$ incidences.

For every curve in $C^*$, we connect each pair of consecutive points of $L^*$ on that curve by an edge drawn along the portion of the curve between the two points.
Write $E$ for the set of edges in this graph.
The number of edges on each curve of $C^*$ is exactly one less than the number of points on it, so overall the number of edges in $G$ satisfies
$$|E| \geq I(L^*,C^*)-2mt.$$

Note that an edge can have high multiplicity, when many curves of $C^*$ pass through its two endpoints, and the endpoints are consecutive on each of these curves.
This situation corresponds to the case in the primal plane where we have many circles touching a pair of lines,
and moreover, the corresponding tangencies are consecutive on each of the circles.

Let $s$ be a parameter that will be chosen later.
Let $E_1$ denote the set of edges with multiplicity at most $s$ and let $E_2$ denote
the set of edges with multiplicity larger than $s$.  In order to bound $|E_1|$ we use the crossing lemma (Theorem \ref{thm:crossing}).
We apply it to the graph with vertex set $L^*$ and edge set $E_1$.
Since any pair of these hyperbola branches intersect at most twice,
the total number of crossings between curves in $C^*$ is at most $2\cdot \binom{2mt}{2} = O(m^2t^2)$.
Combining this upper bound with the lower bound from Theorem \ref{thm:crossing}, we get
\[m^2t^2 = \upOmega\left(\frac{|E_1|^3}{s n^2}\right),\]
so, taking into account the alternative case $|E_1|< 4 ns$,
\begin{equation}\label{eq:Eone}
|E_1| = O(m^{2/3}n^{2/3}t^{2/3}s^{1/3}+ns).
\end{equation}

Next, we consider the edges of $E_2$.  If an edge with endpoints $\ell_1^*,\ell_2^*$ has multiplicity $x$, then the lines $\ell_1$ and $\ell_2$ in the primal plane have $x$ common tangent circles.
The centers of these circles lie on the two angular bisectors defined by $\ell_1,\ell_2$, so
there must be at least $x/2$ incidences between the $m$ points and one of the bisectors of $\ell_1,\ell_2$.

We charge each edge of $E_2$ to the incidence between the angular bisector and the center of the circle
$c$ dual to the curve that the edge lies on.  We claim that each such incidence can be charged at most $2t$ times.
Indeed, in the primal plane, consider such an incidence between a point $p$ and and an angular bisector $\ell$.
There are at most $t$ distinct circles with the same center $p$, and each of these circles can have at most two
pairs of tangent lines such that the angular bisector of those lines is $\ell$, \emph{and} such that the tangencies are consecutive.
(In this argument, we acknowledge the possibility that $\ell$ might be the angular bisector of many pairs of lines,
all of which are tangent to the same circle.)

It follows from the Szemer{\' e}di-Trotter theorem (see \eqref{eq:st} or Theorem \ref{thm:pachsharir}) that the number of lines containing
at least $s/2$ of the $m$ points is $O(m^2/s^3+m/s)$ (as long as $s/2 > 1$), and that the number of incidences between
these $m$ points and $O(m^2/s^3+m/s)$ lines is $O(m^2/s^2+m)$. Thus, using the observation that each incidence is charged by at most $2t$ edges, we have
\begin{equation}\label{eq:Etwo}
|E_2| = O\left(\frac{m^2t}{s^2}+mt\right).
\end{equation}
To balance (\ref{eq:Eone}) and (\ref{eq:Etwo}), we choose $s = O(m^{4/7}t^{1/7}/n^{2/7})$,
noting that, with a proper choice of the constant of proportionality, we have $s>2$. Indeed,
this amounts to requiring that $n=O(m^2t^{1/2})$, which holds since $m\ge n^{1/2}$.
Adding together \eqref{eq:Eone} and \eqref{eq:Etwo} gives
\[|E| = O\left(m^{6/7}n^{4/7}t^{5/7} +m^{4/7}n^{5/7}t^{1/7}+ mt\right).\]
Thus the same bound holds for $T(L,C)$.
Combining this with $T(L,C) = mn$, we get $t = \upOmega(m^{1/5}n^{3/5})$ from the first term,
$t = \upOmega(m^3n^2)$ from the second term, and $t = \upOmega(n)$ from the third term. Thus
\[t = \upOmega\left(m^{1/5}n^{3/5}\right),\]
using the assumption $m\le n^2$.  This completes the proof of Theorem \ref{thm:main1}.
\end{proof}

Note that in the proof above we could set $t$ to be the maximum number of distances from one
of the $m$ points to the $n$ lines. We can therefore conclude that there is a single point from
which the number of distances satisfies the bound.
We note here that this is not the case in the proof of Theorem \ref{thm:main2}
that is provided in Section \ref{sec:spanned} below,
so that proof does not lead to a stronger conclusion of this type.


\section{Distances between points and spanned lines}\label{sec:spanned}

In this section we prove Theorem \ref{thm:main2}, restated below.
The problem is to bound from below the number of distinct distances between a point set $P$ and the set of lines spanned by $P$.
Equivalently, we want to bound from below the number of distinct heights of triangles spanned by $P$.
Write $\ell_{bc}$ for the line spanned by the points $b$ and $c$, write
\[H(P) = \left|\{d(a,\ell_{bc})\mid a,b,c\in P\}\right|\]
for the number of distances between points of $P$ and lines spanned by $P$, and write $H(m)$ for the minimum value of $H(P)$ over all non-collinear point sets $P$ of size $m$.

For point sets with not too many points on a line, Theorem \ref{thm:main1} gives a better bound than Theorem \ref{thm:main2} stated below.
Our goal here is to provide a reasonably good bound that holds also when many points are collinear.
To prove this, we reduce it to showing that the rational function
\[f(x,y)=\frac{(x-y)^2}{1+y^2}\]
is ``expanding'', in the sense that $f(x,y)$ takes $\upOmega(m^{4/3})$ distinct values for $x,y$ in any set of $m$ real numbers.
If $f$ were a polynomial, this would follow directly from a result of Raz et al. \cite{RSS}. However, for rational
functions in general the only known result is that of Elekes and R\'onyai \cite{ER}, which says that the number of values
is superlinear (both these statements have certain exceptions). To extend the bound $\upOmega(m^{4/3})$ to the rational
function $f$, we use the same approach as in \cite{RSS}, which originated in Sharir, Sheffer, and Solymosi \cite{SSS}.

\begingroup
\def\thetheorem{\ref{thm:main2}}
\begin{theorem}
The minimum number $H(m)$ of distances between $m$ non-collinear points and their spanned lines satisfies
\[ H(m) =\upOmega(m^{4/3}).\]
\end{theorem}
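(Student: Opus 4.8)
The plan is to combine Beck's theorem (Theorem~\ref{thm:beck}) with a case analysis that separates the regime where $P$ spans many lines from the regime where many points of $P$ are collinear. By Theorem~\ref{thm:beck} there is a constant $c>0$ so that either $P$ spans at least $cm^2$ lines, or some line carries at least $cm$ points of $P$. In the first case I would apply Theorem~\ref{thm:main1} directly to $P$ and its set $L$ of spanned lines: since $cm^2 \le |L| \le \binom{m}{2} < m^2$, the hypotheses hold, and because the number of point-line distances determined by $P$ and $L$ is at least $D(m,|L|)$, Theorem~\ref{thm:main1} gives $H(P) \ge \upOmega\!\left(m^{1/5}|L|^{3/5}\right) \ge \upOmega\!\left(m^{1/5}(cm^2)^{3/5}\right) = \upOmega(m^{7/5})$, which is stronger than the claimed bound. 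So all the difficulty lies in the collinear case.

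In that case, let $\ell_0$ carry a set $S$ of at least $cm$ points of $P$, and pick a point $p \in P \setminus \ell_0$, which exists since $P$ is non-collinear. As the number of distinct point-line distances is invariant under rigid motions and uniform scaling, I would normalize so that $\ell_0$ is the $x$-axis and $p = (0,1)$, writing the points of $S$ as $(x,0)$ with $x$ ranging over a set (again called $S$) of size $|S| \ge cm$. For $x,y \in S$ the line $\ell_{p,(y,0)}$ spanned by $p$ and $(y,0)$ has equation $X + yY - y = 0$, so a direct computation yields $d\big((x,0),\ell_{p,(y,0)}\big)^2 = (x-y)^2/(1+y^2) = f(x,y)$. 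These are genuine distances between points of $P$ and lines spanned by $P$, so $H(P)$ is at least the number $t$ of distinct values of $f$ on $S \times S$. It therefore suffices to prove that $f$ expands, i.e.\ $t = \upOmega(|S|^{4/3}) = \upOmega(m^{4/3})$.

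To bound $t$ from below I would use the Elekes--R\'onyai framework in the form of \cite{SSS, RSS}. Put $N = |S|$ and let $M$ be the number of quadruples $(x,y,x',y') \in S^4$ with $f(x,y)=f(x',y')$. Grouping the $N^2$ pairs of $S \times S$ by the value of $f$ and applying Cauchy--Schwarz gives $M \ge N^4/t$. For the matching upper bound, for each $(x,x') \in S^2$ I would introduce the planar curve $\gamma_{x,x'} = \{(u,v) : f(x,u)=f(x',v)\}$, which after clearing denominators is algebraic of bounded degree; then $M$ is exactly the number of incidences between the $N^2$ points of $S\times S$ and the $N^2$ curves $\{\gamma_{x,x'}\}$. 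Granting the non-degeneracy conditions discussed below, the Pach--Sharir bound (Theorem~\ref{thm:pachsharir} with $s=2$) gives $M = O\big((N^2)^{2/3}(N^2)^{2/3} + N^2\big) = O(N^{8/3})$, and comparing with $M \ge N^4/t$ yields $t = \upOmega(N^{4/3})$, completing this case and hence the theorem.

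The crux, which I expect to be the main obstacle, is verifying the non-degeneracy needed to apply the incidence bound. A priori two distinct curves $\gamma_{x,x'}$ and $\gamma_{x'',x'''}$ might share a component, and the map $(x,x') \mapsto \gamma_{x,x'}$ might be badly non-injective; either phenomenon would inflate $M$ beyond the count of distinct curves and could destroy the $K_{2,M_0}$-free property required by Theorem~\ref{thm:pachsharir}. Controlling both amounts to excluding the exceptional forms of the Elekes--R\'onyai theorem \cite{ER}, namely that $f$ be of the shape $g\big(h(x)+k(y)\big)$ or $g\big(h(x)k(y)\big)$ after a change of variables, for which expansion would fail. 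For the specific $f(x,y)=(x-y)^2/(1+y^2)$ I would verify directly that no such representation exists, so that distinct pairs $(x,x')$ give curves meeting in $O(1)$ points and the map to curves is $O(1)$-to-one; two points then lie on only $O(1)$ common curves, establishing the $K_{2,M_0}$-free condition. This analysis, carried out through the machinery of \cite{RSS}, is where the real work resides, while the Cauchy--Schwarz and incidence steps above are routine.
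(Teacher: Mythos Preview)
Your high-level strategy matches the paper's: split via Beck's theorem, dispatch the many-lines case with Theorem~\ref{thm:main1}, and in the collinear case reduce to showing that $f(x,y)=(x-y)^2/(1+y^2)$ expands, via the quadruple/Cauchy--Schwarz/incidence scheme of \cite{SSS,RSS}. The difference is in how the incidence problem is set up, and this is exactly the step you flag as the crux.

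You parametrize the curves by fixing the \emph{first} arguments: $\gamma_{x,x'}=\{(u,v):f(x,u)=f(x',v)\}$. Since $1+u^2$ and $1+v^2$ are then variable, these are genuine quartic curves, and you are left needing a $K_{2,M_0}$-free verification that you defer to the Elekes--R\'onyai/RSS machinery. But as the paper itself notes, that machinery is stated for \emph{polynomials}, and $f$ is rational; so ``verify directly that $f$ has no degenerate form'' is not a one-line citation but an actual task, and your proposal does not carry it out.

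The paper sidesteps all of this with one observation you are missing: fix the \emph{second} arguments instead. With $x_i,x_j$ fixed, the denominators $1+x_i^2,\,1+x_j^2$ are constants, so the curve
\[
C_{ij}=\{(z,z'):f(z,x_i)=f(z',x_j)\}
\]
is $(z-x_i)^2/(1+x_i^2)=(z'-x_j)^2/(1+x_j^2)$, which factors into the two \emph{lines} $z'-x_j=\pm A_{ij}(z-x_i)$ with $A_{ij}=\sqrt{(1+x_j^2)/(1+x_i^2)}$. The non-degeneracy check then reduces to showing these $O(k^2)$ lines are pairwise distinct, which the paper does by a short elementary computation (after first discarding half of $W$ to make all $x_i$ positive, needed for the $L_{ij}^-$ family). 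One then applies Szemer\'edi--Trotter directly to $k^2$ points and $O(k^2)$ lines to get $|Q|=O(k^{8/3})$, hence $|f(W)|=\upOmega(k^{4/3})$.

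So your outline is sound and would ultimately succeed, but the paper's choice of parametrization turns the ``real work'' you anticipate into a two-line algebraic check and replaces Pach--Sharir for quartics by plain Szemer\'edi--Trotter.
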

\addtocounter{theorem}{-1}
\endgroup
\begin{proof}
By Theorem \ref{thm:beck}, there is a constant $c$ such that either the points of $P$ span at least $cm^2$ distinct lines, or at least $cm$ points of $P$ are collinear.

In the first case, Theorem \ref{thm:main1} gives
\[H(P) \geq D(m, cm^2)
= \upOmega\left(m^{7/5}\right).\]
Consider the second case, when $k= cm$ of the points are collinear; we can assume that $k$ is an integer.
Since not all the points are collinear, at least one other point $q\in P$ does not belong to this line.
By translating, rotating, and scaling, we can assume that $q=(0,1)$ and that the $k$ collinear points
lie on the $x$-axis, and by removing at most half the points we can assume that they all lie on the
positive $x$-axis. We denote them by $p_i = (x_i,0)$ for $i=1,\ldots,k$, with all $x_i$ positive,
and we set $W=\{x_1,\ldots,x_k\}$.

By a standard formula, the distance $d(p_i,\ell_{p_jq})$ from a point $p_i$ to the line $\ell_{p_jq}$ spanned by $p_j$ and
$q$ equals $|x_i-x_j|/\sqrt{1+x_j^2}$. Thus, putting
\[f(x,y)=\frac{(x-y)^2}{1+y^2},\]
we get that $f(x_i,x_j) = d^2(p_i,\ell_{p_jq})$. Hence,
in order to obtain a lower bound for the number of point-line distances,
it suffices to find a lower bound for the
cardinality of the set $f(W) = \{ f(x,y)\mid x,y\in W\}$, for a set $W$ of $k=\upOmega(m)$
real positive numbers.

Following the setup in \cite{SSS}, we define the set of quadruples
\[ Q=\{(x,y,x',y') \in W^4\mid f(x,y)=f(x',y')\}.\]
Writing $f^{-1}(a)= \{(x,y)\in W^2 \mid f(x,y)=a \}$ and using the Cauchy-Schwarz inequality, we have
\[|Q| = \sum_{a\in f(W)} |f^{-1}(a)|^2 \ge \frac{k^4}{|f(W)|}.\]
Thus an upper bound on $|Q|$ will imply a lower bound on $|f(W)|$.

We define algebraic curves $C_{ij}$ in $\R^2$, for $i,j=1,\ldots,k$, by
\[C_{ij}=\{(z,z')\in \mathbb{R}^2\mid f(z,x_i)=f(z',x_j)\}.\]
We have $(x_k,x_l)\in C_{ij}$ if and only if $(x_k,x_i,x_l,x_j)\in Q$.
Thus, denoting by $\Gamma$ the set of curves $C_{ij}$, and by $S=W^2$ the set of pairs $(x_k,x_l)$, we have (in the notation of Subsection \ref{subsec:incs})
\[ |Q| = I(S,\Gamma).\]

It is not hard to show that the curves $C_{ij}$ with $i=j$ contribute at most $O(k^2)$ quadruples,
which is a negligible number, so in the rest of the proof we will assume that $i\neq j$.
The equation $f(z,x_i)=f(z',x_j)$ is equivalent to $(z-x_i)^2/(1+x_i^2)= (z'-x_j)^2/(1+x_j^2)$, or
\[ z' - x_j =\pm A_{ij}\cdot (z-x_i),\]
where
\[A_{ij} = \sqrt{(1+x_j^2)/(1+x_i^2)}.\]

Every curve $C_{ij}$ is thus the union of two lines in the $zz'$-plane, given by
\[ L_{ij}^{+} : \; z'= A_{ij}z +(x_j - A_{ij}x_i ),  \qquad
L_{ij}^{-} : \; z' =-A_{ij}z+(x_j + A_{ij}x_i ). \]
Therefore, we need only consider the two families $\Gamma^+ = \{L_{ij}^+\mid i\neq j\}$ and
$\Gamma^- = \{L_{ij}^-\mid i\neq j\}$ and bound $I(S,\Gamma^+\cup\Gamma^-)$.
We claim that the lines in $\Gamma^+$ and $\Gamma^-$ are all distinct.
Indeed, given the equation $z'=\alpha z+\beta$ of a line, say with
$\alpha>0$, we have $A_{ij}=\alpha$ and $x_j-\alpha x_i = \beta$. This leads,
as is easily verified, to the linear equation
$2\alpha\beta x_i = \alpha^2-\beta^2-1$ in $x_i$.
This equation has at most one solution in $x_i$ (and thus $x_j$ is also unique), unless
$2\alpha\beta = \alpha^2-\beta^2-1 = 0$. However, $\alpha\ne 0$ by its definition, so
the only problematic case is $\beta=0$, $\alpha=1$ (recall that we assume that $\alpha>0$).
This too is impossible, because we would then have $x_j=x_i$, contrary to our assumption
that they are distinct. The case $\alpha<0$ is handled similarly; in the final step
there, we get $x_j = -x_i$, which now contradicts the assumption that all the $x_i$ are positive.

We thus have an incidence problem for points and lines, with $k^2$ points and $O(k^2)$ distinct lines.
The Szemer\'edi-Trotter theorem (see (\ref{eq:st}) or Theorem \ref{thm:pachsharir}) gives
\[|I(S,\Gamma^+\cup\Gamma^-)| = O((k^2)^{2/3} (k^2)^{2/3} + k^2 + k^2) = O(k^{8/3}).\]
Taking into account the discarded quadruples, we have
\[|Q| = |I(S,\Gamma^+\cup\Gamma^-)| +O(k^2) = O(k^{8/3}),\]
so
\[ |f(W)| \ge \frac{k^4}{|Q|}=\upOmega(k^{4/3})=\upOmega(m^{4/3}),\]
completing the proof of Theorem \ref{thm:main2}.
\end{proof}



\newpage
\section{A bound on tangencies between circles}
\label{sec:circletangs}

In this section we study tangencies between circles.

\begingroup
\def\thetheorem{\ref{thm:circles1}}
\begin{theorem}
Given a family $C$ of $n$ circles in $\R^2$ with arbitrary radii,
there are at most $O(n^{3/2})$ points where at least two circles of $C$ are tangent.
\end{theorem}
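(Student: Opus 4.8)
The plan is to lift the tangency relation to an incidence problem between points and surfaces in $\R^3$ and then invoke Zahl's incidence bound (Theorem \ref{thm:zahl}). First I would parametrize each circle $c\in C$, with center $(a,b)$ and radius $\rho>0$, by the point $\phi(c)=(a,b,\rho)\in\R^3$, obtaining a set $P$ of $n$ points. For a fixed circle $c$, a circle with center $(x,y)$ and radius $r$ is tangent to $c$ precisely when $(x-a)^2+(y-b)^2=(r+\rho)^2$ (external tangency) or $(x-a)^2+(y-b)^2=(r-\rho)^2$ (internal tangency); the union of these two loci is a real algebraic surface $\mathcal S_c\subset\R^3$ of degree four, whose defining polynomial is the product of the two quadratic forms above. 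Set $\mathcal S=\{\mathcal S_c:c\in C\}$. Then $c'$ is tangent to $c$ if and only if $\phi(c')\in\mathcal S_c$, so each tangent pair corresponds to incidences in the incidence graph of $P\times\mathcal S$ (one should discard the $O(n)$ spurious self-incidences $\phi(c)\in\mathcal S_c$ coming from the internal branch at radius zero). Since two distinct tangent circles touch at a unique point, each tangency point of $C$ is determined by, and determines, the circles tangent there.

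To count tangency points rather than tangent pairs, I would select, for every point $q$ at which at least two circles of $C$ are tangent, a single representative tangent pair $\{c,c'\}$ among the circles meeting at $q$; write $\mathcal M$ for the resulting set of pairs, and take one incidence per selected pair to form a subgraph $I_{\mathcal M}$ of the incidence graph of $P\times\mathcal S$, so that $|E(I_{\mathcal M})|=|\mathcal M|$ equals the number of tangency points. The heart of the argument is then to show that $I_{\mathcal M}$ contains no $K_{3,9}$ (and, by the symmetry of tangency, no $K_{9,3}$). A copy of $K_{3,M}$ would consist of three circles $c_1,c_2,c_3$ together with $M$ further circles, each tangent to all of $c_1,c_2,c_3$. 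If $c_1,c_2,c_3$ are not mutually tangent at a common point, then Lemma \ref{lem:apollonius} bounds the number of circles tangent to all three by $8$, so $M\le 8$. If instead $c_1,c_2,c_3$ are mutually tangent at a common point $q_0$, then (after inverting at $q_0$, which turns the three circles into three distinct parallel lines, to which no further circle can be tangent) every circle tangent to all three lies in the pencil of circles through $q_0$ sharing the common tangent direction, and is therefore tangent to each $c_i$ precisely at $q_0$. Hence all edges of such a $K_{3,M}$ would correspond to tangent pairs realized at the single point $q_0$, and at most one of these is a selected pair in $\mathcal M$; so no such configuration with $M\ge 1$ survives in $I_{\mathcal M}$. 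Either way, $I_{\mathcal M}$ has no $K_{3,9}$.

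With this property established, Theorem \ref{thm:zahl} applies to $I_{\mathcal M}$ with $|P|=n$, $|\mathcal S|=n$, and $M=9$, giving $|E(I_{\mathcal M})|=O(n^{3/4}n^{3/4}+n)=O(n^{3/2})$. Since $|E(I_{\mathcal M})|=|\mathcal M|$ is the number of tangency points, this yields the claimed bound $O(n^{3/2})$. For the pairwise version under the hypothesis that no three circles are mutually tangent at a common point, the selection step is unnecessary: Lemma \ref{lem:apollonius} then directly forbids $K_{3,9}$ and $K_{9,3}$ in the full incidence graph, and the same application of Theorem \ref{thm:zahl} bounds the number of tangent pairs by $O(n^{3/2})$.

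I expect the main obstacle to be the treatment of the degenerate configurations in which three or more circles are mutually tangent at a common point, since these are exactly the ones excluded in Lemma \ref{lem:apollonius}, and they can create arbitrarily large complete bipartite subgraphs in the full incidence graph. Selecting one representative pair per tangency point is designed to neutralize precisely this difficulty, and the delicate part is verifying rigorously that every such degenerate $K_{3,M}$ is confined to a single tangency point, so that it contributes at most one selected edge. Secondary technical points I would need to check are that the surfaces $\mathcal S_c$ are genuine bounded-degree real algebraic surfaces in all radius regimes, the separate accounting for internal versus external tangency, and the removal of the $O(n)$ self-incidences, all of which affect only lower-order terms.
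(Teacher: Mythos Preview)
Your proposal is correct and follows essentially the same route as the paper: lift circles to points $(a,b,r)\in\R^3$ and to bounded-degree tangency surfaces, select one representative tangent pair per tangency point to prune the graph, use Lemma~\ref{lem:apollonius} for the generic case and the pencil-through-$q_0$ observation for the degenerate case to forbid $K_{3,9}$, and conclude via Theorem~\ref{thm:zahl}. The only cosmetic differences are that the paper splits each tangency surface into the two cones $\sigma_c^{\pm}$ (rather than your single quartic $\mathcal S_c$) and keeps both incidences per retained pair (incurring a harmless factor~$2$), while you keep one; neither change affects the argument.
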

\addtocounter{theorem}{-1}
\endgroup
\begin{proof}
Let the \emph{tangency graph} of $C$ be the 
graph whose vertex set is $C$,
with an edge between two circles if they are tangent.

First we modify the tangency graph by removing certain edges.
For any point where three or more circles are mutually tangent, we remove all but one (arbitrarily chosen) edge from the corresponding edges of the tangency graph, and we call the resulting subgraph $T$.
Then an upper bound on the number of edges of $T$ will still be an upper bound on the number of tangency points.
Moreover, the graph $T$ contains no $K_{3,9}$ or $K_{9,3}$.
Indeed, if three circles are not mutually tangent at a common point, then they have at most eight common
neighbors in $T$, by Lemma \ref{lem:apollonius}.
On the other hand, if three circles are mutually tangent
at some point $q$, then any common neighbor must be tangent to all three at $q$;
but then, by construction, at most one of the corresponding edges is in $T$.

For each circle $c \in C$, with center $(a,b)$ and radius $r$,
we define a point $p_c = (a,b,r)\in \Re^3$ and two surfaces in $\R^3$ given by
\begin{align*}
\sigma_c^+ &= \{ (x,y,z) \mid (x-a)^2+(y-b)^2 = (z+ r)^2\},\\
 \sigma_c^- &= \{ (x,y,z) \mid (x-a)^2+(y-b)^2 = (z- r)^2\}.
\end{align*}
These surfaces are cones.
Put $P = \{p_c \mid c\in C\}$ and $\Sigma = \{\sigma_c^+\mid c\in C\}\cup \{\sigma_c^-\mid c\in C\}$.

Two circles $c_1,c_2\in C$ are tangent if and only if the point $p_{c_1}$ is incident to one
of the surfaces $\sigma_{c_2}^+,\sigma_{c_2}^-$, and also $p_{c_2}$ is incident to $\sigma_{c_1}^+$ or $\sigma_{c_1}^-$.
Thus the number of edges in the incidence graph of $P\times \Sigma$ is twice the number of edges in the tangency graph of $C$.
Let $I$ be the subgraph of the incidence graph corresponding to the subgraph $T$, i.e., an incidence is in $I$ if the corresponding tangency is in $T$.
As noted, $T$ does not contain a copy of $K_{3,9}$ or $K_{9,3}$, which implies the same for $I$.
It follows from Theorem \ref{thm:zahl} that the number of tangency points is
\[|E(T)| = \frac{1}{2}|E(I)| = O\left(n^{3/4}n^{3/4}\right)=O\left(n^{3/2}\right).\]
This completes the proof.
\end{proof}

\begingroup
\def\thetheorem{\ref{thm:circles2}}
\begin{theorem}
Let $C$ be a family of $n$ circles in $\R^2$ with arbitrary radii.
Assume that no three circles of $C$ are mutually tangent at a common point.
Then $C$ has at most $O(n^{3/2})$ pairs of tangent circles.
\end{theorem}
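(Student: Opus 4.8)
The plan is to follow the proof of Theorem~\ref{thm:circles1} almost verbatim, the key simplification being that the hypothesis that no three circles are mutually tangent at a common point makes the edge-pruning step unnecessary. Let $G$ be the \emph{tangency graph} of $C$, whose vertex set is $C$, with an edge between two circles whenever they are tangent. Since each tangent pair contributes exactly one edge, it suffices to show that $|E(G)| = O(n^{3/2})$. (Note also that, under the present hypothesis, a single point cannot be the tangency point of two distinct pairs, since that would force three circles to share a common tangent line at that point; so pairs of tangent circles and tangency points coincide here, and either quantity may be bounded.)

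First I would verify directly that $G$ contains neither $K_{3,9}$ nor $K_{9,3}$. Take any three circles of $C$. By hypothesis they are \emph{not} mutually tangent at a common point, so Lemma~\ref{lem:apollonius} applies and guarantees that at most eight other circles are tangent to all three. In graph-theoretic terms, any three vertices of $G$ have at most eight common neighbors, which is precisely the forbidden-subgraph condition needed below. The crucial point, and the only substantive difference from the proof of Theorem~\ref{thm:circles1}, is that here \emph{no} edges need to be removed from $G$: the assumption rules out exactly the degenerate triples (three circles mutually tangent at one point) that forced the pruning in the previous proof.

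Next I would lift the configuration to $\R^3$ exactly as before. For a circle $c$ with center $(a,b)$ and radius $r$, set $p_c = (a,b,r)$ and introduce the two cones
\[
\sigma_c^{\pm} = \{\, (x,y,z) \mid (x-a)^2 + (y-b)^2 = (z \pm r)^2 \,\}.
\]
Writing $P = \{p_c \mid c \in C\}$ and $\Sigma = \{\sigma_c^{+}, \sigma_c^{-} \mid c \in C\}$, two circles $c_1, c_2$ are tangent if and only if $p_{c_1}$ lies on $\sigma_{c_2}^{+}$ or $\sigma_{c_2}^{-}$ and, simultaneously, $p_{c_2}$ lies on $\sigma_{c_1}^{+}$ or $\sigma_{c_1}^{-}$. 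Hence the number of edges in the incidence graph of $P \times \Sigma$ equals twice $|E(G)|$. The $K_{3,9}$- and $K_{9,3}$-freeness of $G$ established above transfers to this incidence graph, so it contains no $K_{3,M}$ or $K_{M,3}$ for a suitable constant $M$, and Theorem~\ref{thm:zahl} gives
\[
|E(G)| = O\!\left( n^{3/4} n^{3/4} \right) = O\!\left( n^{3/2} \right),
\]
as desired.

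I do not expect any serious obstacle: the entire geometric and incidence-theoretic machinery is inherited from the proof of Theorem~\ref{thm:circles1}. The only step requiring genuine care is confirming that the hypothesis indeed lets us skip the pruning, i.e.\ that the full tangency graph already avoids the degenerate triples excluded by Lemma~\ref{lem:apollonius}, together with the bookkeeping that relates tangent pairs to incidences in $P \times \Sigma$. In this sense the theorem is essentially the ``clean'' special case of Theorem~\ref{thm:circles1}, in which the assumption does the work that the pruning did there.
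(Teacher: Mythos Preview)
Your proof is correct and matches the paper's own argument almost word for word: the paper proves this statement by saying one proceeds exactly as in the tangency-points theorem, skipping the edge-pruning because the hypothesis lets Lemma~\ref{lem:apollonius} apply to every triple, and then invoking Theorem~\ref{thm:zahl}. One small inaccuracy worth noting (though it plays no role in the proof): your parenthetical claim that two distinct tangent pairs cannot share a tangency point is false in general---two \emph{disjoint} pairs of circles can be tangent at the same point with different tangent directions without producing three mutually tangent circles---so drop that aside.
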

\addtocounter{theorem}{-1}
\endgroup
\begin{proof}
We proceed exactly as in the proof of Theorem \ref{thm:circles1}, but now we do not need to remove any edges.
By Lemma \ref{lem:apollonius} and the condition on the circles, the tangency graph contains no $K_{3,9}$ or $K_{9,3}$.
Then Theorem \ref{thm:zahl} gives the upper bound $O(n^{3/2})$ for the number of tangent pairs.
\end{proof}

Let us observe that a stronger conclusion holds in Theorem \ref{thm:circles2}: The number of incident point-line pairs, such that at least two circles of $C$ are tangent at the point to the line (and thus to one
another), is $O(n^{3/2})$.


\section{Point-plane and point-line distances in \texorpdfstring{$\Re^3$}{space}}
\label{sec:highdim}
In this section we study two possible generalizations of the point-line distance problem to $\R^3$, to point-\emph{line} distances and point-\emph{plane} distances.
We treat point-plane distances first, as there our approach is more similar to that for point-line distances in $\R^2$.

\subsection{Distinct point-plane distances in \texorpdfstring{$\Re^3$}{space}}\label{subsec:pointplane}
\label{sec:pointplane}

One approach to point-plane distances would be to consider, for each plane $\pi$, the set $F_\pi$ of points at a fixed distance from $\pi$; we would then want to bound the number of incidences between the points and the sets $F_\pi$.
However, $F_\pi$ would be a pair of planes, each of which might be contained in many different sets $F_\pi$.
This would make it difficult to apply an incidence bound.
Instead, we consider tangencies of the planes with spheres around the points, and we dualize to obtain an incidence problem.

We first prove an upper bound on plane-sphere tangencies.
Given a finite set $\Pi$ of planes and a finite set $S$ of spheres, we write $T(\Pi,S)$ for
the number of pairs $(\pi,s) \in \Pi\times S$ such that $\pi$ is tangent to $s$, and we define the
\emph{tangency graph} to be the bipartite graph whose vertex classes are $\Pi$ and $S$, with an edge
between $\pi$ and $s$ if $\pi$ is tangent to $s$.

\begin{lemma} \label{lem:sphereplane}
Let $\Pi$ be a set of $n$ planes and let $S$ be a set of $k$ spheres in $\Re^3$.
Assume that the tangency graph of $\Pi\times S$ contains no $K_{M,3}$ or $K_{3,M}$,
for some constant parameter $M$. Then
\[T(\Pi,S) = O\left ( n^{3/4}k^{3/4} + n + k \right),\]
where the constant of proportionality depends on $M$.
\end{lemma}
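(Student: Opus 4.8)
The plan is to dualize the plane-sphere tangency problem into a point-surface incidence problem in $\R^3$, exactly mirroring the two-dimensional dualization used for line-circle tangencies in Subsection \ref{subsec:firstbound}, and then to apply Zahl's incidence bound (Theorem \ref{thm:zahl}). First I would set up a duality that sends each plane to a point and each sphere to a surface whose points are dual to the planes tangent to that sphere. Concretely, after rotating so that no plane is vertical, a plane $z = ax+by+c$ is sent to the dual point $(a,b,c) \in \R^3$; the set $\Pi$ of $n$ planes thus becomes a set $\Pi^*$ of $n$ points. For a sphere $s$ with center $(\alpha,\beta,\gamma)$ and radius $r$, the condition that a plane $z = ax + by + c$ is tangent to $s$ is the equation
\[
\frac{|\gamma - a\alpha - b\beta - c|}{\sqrt{1 + a^2 + b^2}} = r,
\quad\text{i.e.,}\quad
(\gamma - a\alpha - b\beta - c)^2 = r^2(1 + a^2 + b^2),
\]
which defines a constant-degree (quadratic) algebraic surface $s^*$ in the dual $(a,b,c)$-space. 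Each plane-sphere tangency becomes an incidence between the dual point $\pi^*$ and the dual surface $s^*$, so $T(\Pi,S) = I(\Pi^*, S^*)$, where $S^*$ is the set of $k$ dual surfaces.

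Next I would transfer the forbidden-subgraph hypothesis across the duality. The tangency graph of $\Pi \times S$ is, by construction, isomorphic to the incidence graph of $\Pi^* \times S^*$ (an edge is a tangency in one picture and an incidence in the other). Hence the assumption that the tangency graph contains no $K_{M,3}$ or $K_{3,M}$ immediately yields that the incidence graph of $\Pi^* \times S^*$ contains no $K_{M,3}$ or $K_{3,M}$. This is exactly the no-$K_{M,3}$-and-no-$K_{3,M}$ condition required by Theorem \ref{thm:zahl} (with the roles of $m,n$ there played by $n = |\Pi^*|$ and $k = |S^*|$), and since the dual surfaces are quadrics of bounded degree, the theorem applies directly to give
\[
T(\Pi,S) = I(\Pi^*, S^*) = O\left(n^{3/4}k^{3/4} + n + k\right),
\]
which is precisely the claimed bound.

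The main obstacle I anticipate is not the incidence bound itself but the verification of the combinatorial hypothesis and the non-degeneracy of the duality. One must check that the dual surfaces really do have bounded degree and behave as genuine surfaces (e.g., that the duality does not collapse distinct spheres to the same surface, and that tangencies correspond bijectively to incidences), which is where the geometric facts of Subsection \ref{sec:geometricfacts} enter: the $K_{M,3}$/$K_{3,M}$-freeness of the tangency graph is exactly the kind of statement controlled by the sphere-plane tangency lemmas (Lemma \ref{lem:threespheres}), and the present lemma simply takes this forbidden-subgraph condition as a hypothesis rather than proving it. A minor technical point to handle with care is that, as in the planar case, one may need to treat the two ``sides'' of each sphere (tangent planes above versus below) or otherwise split each dual surface so that the relevant intersection bounds hold; but since we are handed the forbidden-subgraph condition directly, this bookkeeping does not affect the argument, and the proof reduces cleanly to an application of Theorem \ref{thm:zahl} after dualization.
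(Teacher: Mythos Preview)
Your proposal is correct and matches the paper's proof essentially line for line: dualize non-vertical planes $z=ax+by+c$ to points $(a,b,c)$, send each sphere to the quadric $(z-ax-by-c)^2=r^2(1+a^2+b^2)$ in the dual space, observe that the tangency graph coincides with the incidence graph, and apply Theorem~\ref{thm:zahl}. The paper does not split the hyperboloid into sheets or invoke Lemma~\ref{lem:threespheres} here either, so the ``minor technical points'' you flag are indeed non-issues for this lemma.
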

\begin{proof}
The proof is similar to the proofs in Subsections \ref{subsec:firstbound} and \ref{subsec:linecirctangs}.
We apply a rotation so that all planes in $\Pi$ are non-vertical.
We map a plane $\pi \in \Pi$ defined by $z=ax+by+c$ to the point $\pi^* = (a,b,c)$.
We map each sphere $s \in S$ to the surface $s^*$ which is the locus of all points $\pi^*$ dual to planes $\pi$ tangent to $s$.
The surface $s^*$ dual to the sphere $s$ with center $(x,y,z)$ and radius $r$ is the set of points $(a,b,c)$ satisfying
\[
\frac{|z-ax-by-c|}{\sqrt{1+a^2+b^2}} = r ,\quad\quad\text{or}\quad\quad
(z-ax-by-c)^2 - r^2(1+a^2+b^2) = 0,
\]
which is a smooth quadric (specifically, a two-sheeted hyperboloid) in $\Re^3$.

Denote the set of resulting points by $\Pi^* = \{ \pi^* \mid \pi \in \Pi\}$, and the family of resulting
hyperboloids by $S^* = \{ s^* \mid s \in S\}$.
Every pair $(\pi,s)$ that is counted in $T(\Pi,S)$ corresponds to an incidence between the
point $\pi^*$ and the surface $s^*$, and the tangency graph of $\Pi\times S$ is the same as the incidence graph of $\Pi^*\times S^*$.
Thus applying Theorem \ref{thm:zahl} to $I(\Pi^*,S^*) = T(\Pi,S)$ gives the stated bound.
\end{proof}

Recall from Section \ref{sec:intro}
that we call a configuration of $s$ points and $t$ planes an $s\times t$  \emph{cone/cylinder configuration}
if the $s$ points lie on a line $\ell$, and the $t$ planes are all tangent to the same cone/cylinder with axis $\ell$.
In both types of configurations, there is a complete bipartite graph in the tangency graph between the planes and the spheres, centered at the points, that are tangent to the cone or cylinder.

\begin{corollary}\label{cor:planesspheres}
Let $\Pi$ be a set of $n$ planes and $S$ a set of $k$ spheres in $\Re^3$, that do not determine a $3\times 3$ cone or cylinder configuration.
Then
\[T(\Pi,S) = O\left ( n^{3/4}k^{3/4} + n + k \right).\]
\end{corollary}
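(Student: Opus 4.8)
The plan is to reduce Corollary \ref{cor:planesspheres} to Lemma \ref{lem:sphereplane} by verifying that the hypothesis of the latter holds under the geometric assumption of the former. Lemma \ref{lem:sphereplane} already gives the desired bound $T(\Pi,S)=O(n^{3/4}k^{3/4}+n+k)$ \emph{provided} the tangency graph contains no $K_{M,3}$ or $K_{3,M}$ for some constant $M$. So the entire task is to show that forbidding a $3\times 3$ cone or cylinder configuration forces such a $K_{M,3}$-free and $K_{3,M}$-free property, with an explicit constant $M$. The natural candidate is $M=9$, since the geometric facts in Section \ref{sec:geometricfacts} are phrased with the number nine.

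First I would rule out a copy of $K_{3,M}$ on the side of the planes. A copy of $K_{3,M}$ with three planes adjacent to $M$ spheres means three planes that share at least $M$ common tangent spheres. By Lemma \ref{lem:threespheres}$(b)$, if three planes have at least nine common tangent spheres, then three of those spheres together with the three planes form a $3\times 3$ cylinder or cone configuration. Hence, if we assume that $\Pi$ and $S$ do not determine a $3\times 3$ cone or cylinder configuration, then any three planes have at most eight common tangent spheres, so the tangency graph contains no $K_{3,9}$. Symmetrically, I would rule out $K_{9,3}$ on the side of the spheres: a $K_{9,3}$ would give three spheres with at least nine common tangent planes. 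By Lemma \ref{lem:threespheres}$(a)$, three spheres have at most eight common tangent planes unless they have a common tangent cone or cylinder; in the latter case the three sphere-centers lie on the axis of that cone or cylinder, and the relevant planes are all tangent to it, which is again a forbidden configuration (a $3\times 3$ cone or cylinder configuration, after noting that the sphere centers $p_c$ correspond to the points of $P$ lying on the axis). Thus the tangency graph also contains no $K_{9,3}$.

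Therefore, with $M=9$, the tangency graph of $\Pi\times S$ contains neither $K_{9,3}$ nor $K_{3,9}$, and Lemma \ref{lem:sphereplane} applies directly to yield
\[
T(\Pi,S)=O\left(n^{3/4}k^{3/4}+n+k\right),
\]
as claimed.

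The only genuine subtlety, and the step I would be most careful about, is the correct bookkeeping between the two exceptional configurations of Lemma \ref{lem:threespheres} and the single phrase ``$3\times 3$ cone or cylinder configuration'' in the corollary. In particular, I would make sure that the ``common tangent cone or cylinder'' appearing in part $(a)$ (where the spheres play the active role) and the ``cylinder or cone configuration'' appearing in part $(b)$ (where the planes play the active role) both translate into the \emph{same} forbidden object in the statement, namely points on a line $\ell$ together with planes tangent to a cone or cylinder with axis $\ell$. This requires checking that when three spheres share a tangent cone or cylinder, their centers $p_c$ indeed lie on the axis $\ell$, which is exactly the geometric content needed to match the definition of a cone/cylinder configuration given in Section \ref{sec:intro}. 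Once this identification is made, the proof is essentially immediate, and no further computation is needed.
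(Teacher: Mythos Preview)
Your proposal is correct and follows exactly the paper's approach: apply Lemma~\ref{lem:threespheres} to verify that the tangency graph of $\Pi\times S$ contains no $K_{9,3}$ or $K_{3,9}$, and then invoke Lemma~\ref{lem:sphereplane} with $M=9$. Your additional care in matching the exceptional cases of parts $(a)$ and $(b)$ of Lemma~\ref{lem:threespheres} to the definition of a $3\times 3$ cone or cylinder configuration (via the sphere centers lying on the axis) is well placed and makes explicit what the paper leaves implicit.
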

\begin{proof}
By Lemma \ref{lem:threespheres}, there is no $K_{9,3}$ or $K_{3,9}$ in the tangency graph of $\Pi$ and $S$.
Thus Lemma \ref{lem:sphereplane} gives the stated bound.
\end{proof}

We now deduce a lower bound on point-plane distances, under the condition that the points and planes do not determine any cone or cylinder configurations.
As mentioned in Section \ref{sec:intro}, a cylinder configuration determines only one point-plane distance, so any non-trivial lower bound must have some condition that excludes cylinder configurations.
On the other hand, a cone configuration determines at least half as many distances as it has points on the axis of the cone,
so it does not seem necessary to have any restriction on cone configurations at all.
However, because our proof goes via Corollary \ref{cor:planesspheres}, we have no choice but to exclude them too.

\begingroup
\def\thetheorem{\ref{thm:3Dplanes}}
\begin{theorem}
Let $P$ be a set of $m$ points and let $\Pi$ be a set of $n$ planes in $\Re^3$.
Assume that $P$ and $\Pi$ do not determine a $3\times 3$ cone or cylinder configuration.
Then the number of distinct point-plane distances determined by $P$ and $\Pi$ is
\[\upOmega\left(\min\left\{m^{1/3}n^{1/3},n \right\}\right),\]
unless $m=O(1)$.
\end{theorem}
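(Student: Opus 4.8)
The plan is to follow the template of the planar warm-up argument in Subsection \ref{subsec:firstbound}, now carried out in $\R^3$ with spheres in place of circles. Let $t$ denote the number of distinct point-plane distances determined by $P$ and $\Pi$. Around each point $p\in P$ I draw the (at most $t$) spheres centered at $p$ whose radii are the distinct distances from $p$ to the planes of $\Pi$, and I let $S$ be the resulting set of at most $mt$ spheres. Each pair $(p,\pi)\in P\times\Pi$ gives exactly one tangency, namely between $\pi$ and the sphere centered at $p$ of radius $d(p,\pi)$, so that $T(\Pi,S)=mn$. The goal is then to bound $T(\Pi,S)$ from above using Corollary \ref{cor:planesspheres} and to compare the two estimates.

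Before invoking Corollary \ref{cor:planesspheres} I must verify its hypothesis, i.e., that $\Pi$ and $S$ do not determine a $3\times 3$ cone or cylinder configuration. This is the step where the non-degeneracy assumption on $P$ and $\Pi$ is used, and I expect it to be the main (though not difficult) point of the argument, since it is the only place the hypothesis enters. The crucial observation is that every sphere of $S$ is centered at a point of $P$. Suppose, for contradiction, that three spheres $s_1,s_2,s_3\in S$ and three planes of $\Pi$ formed such a configuration. Then the three planes would be tangent to a common cone or cylinder with some axis $\ell$, and $s_1,s_2,s_3$ would be inscribed in that quadric with centers on $\ell$. Since distinct spheres inscribed in a single cone or cylinder necessarily have distinct centers (the center determines the inscribed sphere in each case), these three centers would be three distinct points of $P$ lying on $\ell$. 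Together with the three planes tangent to the cone or cylinder with axis $\ell$, they would constitute a $3\times 3$ cone or cylinder configuration of $P$ and $\Pi$, contradicting the hypothesis. Hence $\Pi$ and $S$ have no such configuration, and the corollary applies.

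With the hypothesis verified, Corollary \ref{cor:planesspheres} applied to $\Pi$ and $S$, with $|\Pi|=n$ and $|S|\le mt$, gives
\[ mn = T(\Pi,S) = O\!\left(n^{3/4}(mt)^{3/4} + n + mt\right). \]
Comparing $mn$ with each term in turn: the first term yields $m^{1/4}n^{1/4}=O(t^{3/4})$, hence $t=\upOmega\!\left(m^{1/3}n^{1/3}\right)$; the second term $mn=O(n)$ yields $m=O(1)$; and the third term $mn=O(mt)$ yields $t=\upOmega(n)$. Discarding the degenerate case $m=O(1)$, I conclude
\[ t=\upOmega\!\left(\min\left\{m^{1/3}n^{1/3},\,n\right\}\right), \]
which is exactly the bound claimed. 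The only genuinely delicate point is the transfer of the non-degeneracy condition from $(P,\Pi)$ to $(\Pi,S)$ described above; everything else is the same double-counting scheme already used in the planar case.
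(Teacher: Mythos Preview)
Your proof is correct and follows the same approach as the paper: set up the spheres $S$, observe $T(\Pi,S)=mn$, apply Corollary~\ref{cor:planesspheres}, and compare terms. The paper's own proof is essentially identical but omits the explicit verification that the non-degeneracy condition on $(P,\Pi)$ passes to $(\Pi,S)$; your paragraph making this transfer explicit (in particular the observation that distinct inscribed spheres in a fixed cone or cylinder have distinct centers) is a welcome clarification rather than a departure.
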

\addtocounter{theorem}{-1}
\endgroup
\begin{proof}
The proof is analogous to the proof in Subsection \ref{subsec:firstbound}. Let $t$ denote the total
number of distinct distances between points in $P$ and planes in $\Pi$. We place at most $t$
spheres centered at each $p\in P$ according to the occurring distances from $p$ to the planes in $\Pi$.
Let $S$ denote the resulting family of at most $mt$ distinct spheres.

On the one hand, the number of tangencies satisfies $T(\Pi,S) = mn$.
On the other hand, applying Corollary \ref{cor:planesspheres} to $\Pi$ and $S$ gives
\[mn = T(\Pi,S) = O\left(n^{3/4} (mt)^{3/4} +n+ mt\right).\]
The second term gives $m = O(1)$ and the third term gives $t = \upOmega (n)$.
Thus we have
\[ t = \upOmega\left(\min\left\{m^{1/3}n^{1/3},n\right\}\right),\]
unless $m=O(1)$.
\end{proof}

As in the case of point-line distances, our proof yields a stronger statement:
Under the same assumptions, there is a point $p \in P$ with at least $\upOmega(m^{1/3}n^{1/3})$ distinct distances from $p$ to the planes in $\Pi$.


\subsection{Distinct point-line distances in \texorpdfstring{$\Re^3$}{space}}

Finally, we consider distances between points and lines in $\R^3$.
There are two possible approaches we could take.
One approach would be to consider cylinders around the lines, so that we get an incidence problem between points in $\R^3$ and cylinders.
To apply an incidence bound, we would need a geometric fact of the form ``five points are all contained in at most a bounded number of cylinders, unless \ldots''; unfortunately, we do not know an appropriate fact of this form.
Note that a set of points on a line and a set of cylinders containing that line determine a complete bipartite incidence graph.

The second approach, which is the one we take, is to consider tangencies between spheres and lines.
The analogue of dualizing would be to move from lines to points in the parameter space of lines in $\R^3$, which is technically more complicated.
Instead, we move to a simple parameter space for spheres, by sending a sphere to the point in $\R^4$ consisting of the center of the sphere and its radius.
 This is analogous to the approach used for circles in Section \ref{sec:circletangs}.

\begin{lemma} \label{lem:sphereline}
Let $L$ be a set of $n$ lines and let $S$ be a set of $k$ spheres in $\Re^3$.
Assume that the tangency graph of $L\times S$ contains no $K_{4,M}$,
for some constant parameter $M$. Then
\[T(L,S) = O\left ( n^{4/5+\eps}k^{4/5} + n + k \right),\]
for any $\eps>0$, where the constant of proportionality depends on $M$ and $\eps$.
\end{lemma}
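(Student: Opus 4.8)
The plan is to follow the same template as in Lemma~\ref{lem:sphereplane}, namely to move spheres to points in a parameter space and to apply an incidence bound, but now in dimension four rather than three. Concretely, I would send each sphere $s \in S$ with center $(x,y,z)$ and radius $r$ to the point $s^\flat = (x,y,z,r) \in \R^4$, so that $S$ becomes a set $S^\flat$ of $k$ points in $\R^4$. The key observation is that, unlike in the plane-sphere case, it is more convenient to dualize the \emph{spheres} (which have a clean four-parameter description) than to dualize the lines, whose parameter space (the Grassmannian / Pl\"ucker coordinates) is more cumbersome. For each line $\ell \in L$, I would then let $V_\ell$ be the locus in $\R^4$ of all points $(x,y,z,r)$ such that the sphere with center $(x,y,z)$ and radius $r$ is tangent to $\ell$. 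This locus is a constant-degree algebraic variety: tangency of the sphere to $\ell$ means that the distance from the center $(x,y,z)$ to $\ell$ equals $|r|$, and squaring the standard point-line distance formula gives a polynomial equation of bounded degree in $(x,y,z,r)$. Thus $L$ maps to a set $\mathcal{V}$ of $n$ constant-degree varieties in $\R^4$.

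Having set this up, tangencies between $L$ and $S$ are in bijection with incidences between $S^\flat$ and $\mathcal{V}$, so $T(L,S) = I(S^\flat, \mathcal{V})$. I would then apply Theorem~\ref{thm:FPSSZ1} with $d=4$. The incidence bound requires a $K_{s,t}$-free condition on the incidence graph, and here the relevant direction comes from the hypothesis: the tangency graph contains no $K_{4,M}$, which translates into the incidence graph of $S^\flat \times \mathcal{V}$ containing no $K_{4,M}$ (taking $s=4$, $t=M$ in the notation of Theorem~\ref{thm:FPSSZ1}, after checking which side of the bipartite graph corresponds to points and which to varieties). Plugging $d=4$ and $s=4$ into the exponents of Theorem~\ref{thm:FPSSZ1} gives
\[
I(S^\flat,\mathcal{V}) = O\left(k^{\frac{4\cdot 3}{16-1}+\eps} n^{\frac{4\cdot 3}{16-1}} + k + n\right)
= O\left(k^{4/5+\eps} n^{4/5} + k + n\right),
\]
which, after identifying $|S^\flat| = k$ as the point set and $|\mathcal{V}| = n$ as the variety set, yields precisely the claimed bound $O(n^{4/5+\eps}k^{4/5} + n + k)$. (One must be careful about which of $m,n$ in the theorem statement plays the role of points versus varieties, but the symmetry of the exponent $s(d-1)/(ds-1) = d(s-1)/(ds-1)$ when $s=d=4$ means both exponents equal $4/5$, so this bookkeeping is benign here.)

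\textbf{Main obstacle.} The step I expect to require the most care is verifying that the $K_{4,M}$-free hypothesis on the tangency graph indeed transfers to a $K_{s,t}$-free condition on the incidence graph of $S^\flat \times \mathcal{V}$ in exactly the form Theorem~\ref{thm:FPSSZ1} demands, and in the correct orientation. A $K_{4,M}$ in the tangency graph would mean four lines all tangent to $M$ common spheres; by Lemma~\ref{lem:fourspheres}, four spheres with non-collinear centers have only boundedly many common tangent lines, so the genuinely dangerous configurations are exactly the cone/cylinder/hyperboloid configurations excluded in Theorem~\ref{thm:3Dlines}. Thus the hypothesis ``no $K_{4,M}$'' is the clean abstraction that lets us sidestep those degenerate ruled-surface configurations at the level of this lemma, deferring the geometric justification (via Lemma~\ref{lem:fourspheres}) to the point where the lemma is applied. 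A secondary technical point is confirming that the varieties $V_\ell$ are genuinely of bounded degree and that the incidence bound of Theorem~\ref{thm:FPSSZ1} applies to them without additional smoothness or irreducibility assumptions; since the defining equation is an explicit low-degree polynomial, this should be routine, but it is worth stating explicitly.
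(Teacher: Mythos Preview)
Your approach is essentially identical to the paper's: spheres become points in $\R^4$, lines become constant-degree hypersurfaces, and Theorem~\ref{thm:FPSSZ1} with $d=s=4$ gives the bound. Two small remarks. First, the paper uses the coordinate $(a,b,c,r^2)$ rather than $(x,y,z,r)$, which makes each $V_\ell$ an honest paraboloid, but your choice works just as well since the tangency equation is still a degree-two polynomial. Second, in your ``Main obstacle'' paragraph you momentarily reverse the orientation, writing ``four lines all tangent to $M$ common spheres''; the reading that matches your own plan (and the paper's) is four \emph{spheres} with $M$ common tangent lines, so that the $4$ sits on the point side of $S^\flat\times\mathcal{V}$ and $s=4$ in Theorem~\ref{thm:FPSSZ1}. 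Your symmetry remark about the exponents does not rescue the other orientation, since it would force $s=M$ rather than $s=4$; it is the choice $s=4$ (on the point side), not the coincidence $s=d$, that produces the exponent $4/5$.
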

\begin{proof}
Map the sphere with center $(a,b,c)$ and radius $r$ to the point $(a,b,c,r^2)\in \R^4$.
Map a line $\ell$ in $\R^3$ to the set $V_\ell$ of all points corresponding to spheres that are tangent to the line.
If we represent $\ell$ by a point $u_\ell$ on it and a unit vector $v_\ell$ in its direction,
then an easy calculation shows that
$$
V_\ell = \{ (a,b,c,r^2) \mid r^2 = \left| ((a,b,c)-u_\ell)\times v_\ell\right|^2 \} ,
$$
which is the equation of a paraboloid in the four-dimensional $(a,b,c,r^2)$-space.
It is also clear that all these paraboloids are distinct.

Let $P$ be the set of points corresponding to the spheres of $S$, and let $V$ be the set of
paraboloids $V_\ell$ corresponding to the lines $\ell\in L$.
We apply Theorem \ref{thm:FPSSZ1} to $P$ and $V$.
A sphere and a line are tangent if and only if the corresponding point and paraboloid are incident,
so the tangency graph of $S\times L$ is identical to the incidence graph of $P\times V$.
By assumption, this graph contains no $K_{4,M}$, so Theorem \ref{thm:FPSSZ1} gives the stated bound.
\end{proof}

Note that the $\eps$ in Lemma \ref{lem:sphereline} (and in what follows) could be removed using the result of Basu and Sombra \cite{BS} mentioned in Subsection \ref{subsec:incs},
but this would require checking further conditions.
Since the bound is not expected to be tight anyway, we relied on the more convenient incidence bound of Fox et al. \cite{FPSSZ}.

\begin{corollary}\label{cor:linesspheres}
Let $L$ be a set of $n$ lines and $S$ a set of $k$ spheres in $\Re^3$, such that there is no $4\times 4$ cone/cylinder/hyperboloid configuration of four lines and four centers of the spheres.
Then
\[T(L,S) = O\left ( n^{4/5+\eps}k^{4/5} + n + k \right),\]
for any $\eps>0$, where the constant of proportionality depends on $\eps$.
\end{corollary}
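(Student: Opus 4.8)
The plan is to imitate the proof of Corollary~\ref{cor:planesspheres}: I would reduce the statement to verifying the hypothesis of Lemma~\ref{lem:sphereline} that the tangency graph of $L\times S$ contains no $K_{4,M}$ for some constant $M$, and then invoke that lemma directly. Under the reduction used to prove Lemma~\ref{lem:sphereline}, the spheres play the role of the points $\mathcal{P}$ in Theorem~\ref{thm:FPSSZ1} and the lines play the role of the varieties, so the four-element side of a forbidden $K_{4,M}$ lies among the spheres. Thus a $K_{4,M}$ corresponds to four spheres of $S$ that have at least $M$ common tangent lines drawn from $L$. It therefore suffices to show that, under the assumption that no $4\times 4$ cone/cylinder/hyperboloid configuration arises, four spheres can share at most a bounded number of common tangent lines; choosing $M$ to exceed this bound then rules out $K_{4,M}$ and lets Lemma~\ref{lem:sphereline} finish the proof.

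To bound the number of common tangent lines of four spheres, I would split into two cases according to Lemma~\ref{lem:fourspheres}. If the four centers are not collinear, the lemma bounds the number of common tangent lines by twelve, so any $M>12$ excludes this case. If the four centers lie on a common line $\ell$, then by the second part of Lemma~\ref{lem:fourspheres} every common tangent line lies on some cone, cylinder, or one-sheeted hyperboloid of revolution with axis $\ell$ tangent to all four spheres. The crux is to show that only a bounded number $B$ of such ruled quadrics exist. Granting this, if four spheres had more than $3B$ common tangent lines, then since each such line lies on at least one of the at most $B$ quadrics, the pigeonhole principle would place four of these lines on a single quadric; together with the four collinear centers this is precisely a $4\times 4$ cone/cylinder/hyperboloid configuration, contradicting the hypothesis. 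Taking $M=\max\{13,\,3B+1\}$ then yields the desired $K_{4,M}$-freeness.

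For the bound $B$ on the tangent ruled quadrics I would use a degrees-of-freedom count. A surface of revolution with axis $\ell$ is described by a constant number of parameters (for a one-sheeted hyperboloid of revolution, its two semi-axes together with the position of its center along $\ell$, with cones and cylinders appearing as limiting cases), and tangency to a sphere centered on $\ell$ is a single algebraic condition---equivalently, tangency of the profile hyperbola to the cross-sectional circle of the sphere in the half-plane bounded by $\ell$. Imposing three such conditions (tangency to three of the four spheres) already cuts the parameter space down to a zero-dimensional set whose cardinality is bounded by a B\'ezout-type estimate, and every quadric tangent to all four spheres lies in this finite set. I expect the main obstacle to lie exactly here: making this count fully rigorous and, in particular, ruling out or separately handling degenerate configurations in which the tangency conditions are dependent and the family of tangent quadrics becomes positive-dimensional, so that the pigeonhole step would fail. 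In such a degenerate situation one would have to argue either that four of the tangent lines must still lie on a common quadric, so that a $4\times 4$ configuration appears regardless, or that the situation cannot occur under the standing hypotheses. The remainder of the argument is a routine transcription of the proof of Corollary~\ref{cor:planesspheres}.
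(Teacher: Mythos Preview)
Your approach is exactly the paper's: the entire proof there reads ``This follows from Lemma~\ref{lem:sphereline} and Lemma~\ref{lem:fourspheres},'' and you have correctly unpacked what that entails, including the orientation of the forbidden $K_{4,M}$. The subtlety you flag---bounding the number $B$ of ruled quadrics of revolution tangent to four spheres with collinear centers, so that pigeonhole forces four of the common tangent lines onto a single quadric---is left just as implicit in the paper's one-line proof; your B\'ezout sketch in the quotient by the rotation about $\ell$ is the natural way to justify it.
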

\begin{proof}
This follows from Lemma \ref{lem:sphereline} and Lemma \ref{lem:fourspheres}.
\end{proof}

We now arrive at our lower bound for point-line distances in space, under the condition that the points and lines do not determine any cone/cylinder/hyperboloid configuration (defined in Section \ref{sec:intro}).

\begingroup
\def\thetheorem{\ref{thm:3Dlines}}
\begin{theorem}
Let $P$ be a set of $m$ points and $L$ a set of $n$ lines in $\Re^3$.
Assume that $P$ and $L$ do not determine any $4 \times 4$ cone/cylinder/hyperboloid configuration.
Then the number of distinct point-line distances determined by $P$ and $L$ is
\[\upOmega\left(\min\left\{m^{1/4}n^{1/4-\eps},n\right\}\right),\]
for any $\eps>0$ (where the constant of proportionality depends on $\eps$), unless $m=O(1)$.
\end{theorem}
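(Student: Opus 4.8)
The plan is to mimic the warmup argument of Subsection \ref{subsec:firstbound}, but now in $\R^3$ and using the sphere-line tangency bound of Corollary \ref{cor:linesspheres} in place of the planar line-circle bound. Let $t$ denote the total number of distinct point-line distances determined by $P$ and $L$. Around each point $p\in P$ I would draw at most $t$ spheres, one for each distinct distance occurring from $p$ to the lines of $L$, and collect these into a family $S$ of at most $mt$ spheres. Every pair $(p,\ell)\in P\times L$ contributes exactly one tangency between $\ell$ and the sphere centered at $p$ whose radius equals $d(p,\ell)$, so the tangency count satisfies $T(L,S)=mn$.

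The key step is to verify that the configuration hypothesis of Corollary \ref{cor:linesspheres} is inherited from the hypothesis of the theorem. The corollary requires that there be no $4\times 4$ cone/cylinder/hyperboloid configuration formed by four lines of $L$ and four centers of spheres in $S$. But every sphere of $S$ is centered at a point of $P$, so any such configuration among $L$ and the centers of $S$ is exactly a $4\times 4$ cone/cylinder/hyperboloid configuration of four lines of $L$ and four points of $P$ — precisely what the theorem assumes does not occur. Hence Corollary \ref{cor:linesspheres} applies to $L$ and $S$, giving
\[
mn = T(L,S) = O\left(n^{4/5+\eps}(mt)^{4/5} + n + mt\right),
\]
for any $\eps>0$. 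It then remains to eliminate $t$ from this inequality. The third term $mt$ yields $t=\upOmega(n)$, and the second term $n$ forces $m=O(1)$. The first term gives $mn = O(n^{4/5+\eps}m^{4/5}t^{4/5})$, which rearranges to $t^{4/5}=\upOmega(m^{1/5}n^{1/5-\eps})$, so $t=\upOmega(m^{1/4}n^{1/4-\eps'})$ after absorbing the $\eps$ into a slightly larger $\eps'$ (and renaming). Combining the three cases gives
\[
t = \upOmega\left(\min\left\{m^{1/4}n^{1/4-\eps},\,n\right\}\right),
\]
unless $m=O(1)$, which is the desired bound.

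I expect the main obstacle to be the bookkeeping of the forbidden-configuration hypothesis rather than any deep new idea. One must be careful that the exceptional configurations in Lemma \ref{lem:fourspheres} — four spheres with collinear centers whose common tangent lines lie on a shared cone, cylinder, or one-sheeted hyperboloid — translate back, through the center-and-radius parametrization of $S$, into exactly the $4\times 4$ cone/cylinder/hyperboloid configurations of points and lines that the theorem excludes. Since the centers of the spheres in $S$ are the points of $P$, and the tangent lines are the lines of $L$, this correspondence is immediate, but it is worth stating explicitly so that the application of Corollary \ref{cor:linesspheres} is justified. The only other minor point is that, as in Theorem \ref{thm:3Dplanes}, the argument actually bounds the number of distances from a single worst-case point, since $t$ may be taken as the maximum over $p\in P$ of the number of distinct distances from $p$ to $L$; this yields the same strengthened conclusion noted there.
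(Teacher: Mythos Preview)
Your proposal is correct and follows essentially the same approach as the paper: draw at most $t$ spheres around each point, observe that $T(L,S)=mn$, apply Corollary~\ref{cor:linesspheres} with $k\le mt$, and eliminate $t$ from the resulting inequality. If anything, you are more explicit than the paper in justifying why the forbidden-configuration hypothesis of the corollary is inherited from the theorem's hypothesis on $P$ and $L$.
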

\addtocounter{theorem}{-1}
\endgroup
\begin{proof}
Let $t$ denote the total number of distinct distances between points in $P$ and lines in $L$.
We place at most $t$ spheres centered at each $p\in P$ according to the occurring distances from $p$ to the lines in $L$.
Let $S$ denote the resulting family of at most $mt$ distinct spheres.
Applying Corollary \ref{cor:linesspheres} to $L$ and $S$ gives
\[mn = T(L,S) = O\left(n^{4/5+\eps} (mt)^{4/5} +n+ mt\right).\]
The second term gives $m = O(1)$ and the third term gives $t = \upOmega (n)$.
Thus we have
\[ t = \upOmega\left(\min\left\{m^{1/4}n^{1/4-\eps},n\right\}\right),\]
unless $m=O(1)$.
\end{proof}




\end{document}